\newtheorem{theorem}{Theorem}[section]
\newtheorem{lemma}[theorem]{Lemma}
\newtheorem{corollary}[theorem]{Corollary}
\theoremstyle{remark}
\numberwithin{equation}{section}
\newcommand{\abs}[1]{\lvert#1\rvert}
\newcommand{\norm}[1]{\|#1\|}
\newcommand{\sprod}[2]{\langle #1, #2 \rangle}
\def\d{\partial}
\def\R{{\mathbb R}}
\def\eti{\tilde{e}^i}
\def\E{\mathcal{E}}
\def\Et{\tilde{\mathcal{E}}}
\def\Rt{\tilde{R}}
\def\Kt{\tilde{K}}
\def\Qt{\tilde{Q}_T}
\def\ds{\displaystyle}
\begin{document}
\title[A posteriori error estimates for Vlasov-Maxwell system]{A posteriori error estimates for the 
one and one-half Dimensional Relativistic Vlasov-Maxwell system
 }
\author[M.~Asadzadeh]{Mohammad Asadzadeh$^\dag$}

\address{
Department of Mathematics,
Chalmers University of Technology and  G\"oteborg University,
SE--412 96, G\"oteborg, Sweden
}

\email{mohammad@chalmers.se}

\author[C.~Standar]{Christoffer Standar$^\ddag$}

% \address{
% Department of Mathematics,
% Chalmers University of Technology and  G\"oteborg University,
% SE--412 96, G\"oteborg, Sweden
% }
\email{standarc@chalmers.se}

\keywords{Streamline Diffusion, Vlasov-Maxwell, 
a posteriori error estimates, stability, convergence
}

\subjclass{ 65M15, 65M60}

\footnote{$^\ddag$ Corresponding author}  
\thanks{$^\dag$ The research of this author was partially supported by the Swedish Research Council (VR)}

\begin{abstract}
This paper concerns a posteriori error analysis for the 
streamline diffusion (SD)
finite element method for the one and one-half dimensional 
relativistic Vlasov-Maxwell system. The SD scheme yields a weak formulation,
that corresponds to an add of extra diffusion to, e.g. 
the system of equations having hyperbolic nature, and 
 convection-dominated convection diffusion problems. A procedure that 
improves the convergence of finite elements for this type of problems. 
The a posteriori error estimates relay on a dual problem formulation and 
yields an error control based on the, computable, residual of the 
approximate solution. The lack of dissipativity  
enforces us considering {\sl negative norm estimates}. 
To derive these estimates, the  error term is split 
into an iteration  
and an approximation error where the iteration procedure is assumed to   
converge.
The computational aspects and implementations, 
which justify the theoretical results of this part,  
 are the subject of these studies and addressed in \cite{Bondestam_Standar:2017}. 

\end{abstract}

\maketitle

\section{Introduction} \label{sec1}
This paper concerns a posteriori error analysis 
for approximate solution of the Vlasov-Maxwell (VM) system  by the
{\sl streamline diffusion} (SD) finite element methods.
Our main objective is to prove a posteriori error estimates for 
the SD scheme in the 
$H_{-1}(H_{-1})$ and $L_\infty(H_{-1})$ norms for the Maxwell equations 
and 
 $L_\infty(H_{-1})$ norm for the Vlasov part. The VM system 
lacking dissipativity exhibits severe stability draw-backs and 
 the usual $L_2(L_2)$ and 
$L_\infty(L_2)$ errors are only bounded by the residual norms. 
Thus, in order not to rely on the smallness of the residual errors,  
we employ the negative norm estimates 
to pick up convergence rates also involving powers of the mesh parameter $h$ 
and having optimality properties due to the maximal available regularity 
of the exact solution. 
Both Vlasov and Maxwell equations are of 
hyperbolic type and for the exact solution in the Sobolev space $H^{r+1}$, 
the classical finite element method for 
hyperbolic partial differential equations 
will have, an optimal, convergence rate 
of order 
${\mathcal O}(h^r)$, where $h$ is the mesh size. 
On the other hand, with the same regularity 
($H^{r+1}$) the optimal convergence rate 
for the elliptic and parabolic problems is of order 
${\mathcal O}(h^{r+1})$. This phenomenon, and the lack of diffusivity 
in the hyperbolic equations which cause oscillatory behavior 
in the finite element schemes, sought for constructing modified 
finite element schemes that could enhance stability and improve 
the convergence behavior for hyperbolic problems. In this regard, 
compared to the classical 
finite element, the SD 
schemes, corresponding to the add of diffusion term to the hyperbolic equation, 
are more stable and have an improved convergence rate viz, 
${\mathcal O}(h^{r+1/2})$. Roughly, 
the SD method is based on 
a weak formulation where a multiple of convection term 
is added to the test function. With this choice of the test 
functions the variational formulation resembles to that of an equation which, 
compared to the original hyperbolic equation, has an additional diffusion term 
of the order of the multiplier. 

A difficulty arises deriving gradient 
estimates for the dual problems, which are crucial for the 
error analysis for the discrete models in 
 both equation types in the VM system. This is due to the lack of 
dissipative terms in the equations. An elaborate discussion 
on this issue can be found in the classical results, e.g.,  
 \cite{Lax-Phillips:1960}, \cite{Friedrichs} and \cite{Tartakoff} as well as in 
relatively recent studies in \cite{Johnson} and \cite{Suli-Houston:97}.

We use the advantage of low spatial dimension that, assuming 
sufficient regularity, yields existence and uniqueness through d'Alembert 
formula. 
This study 
can be extended to  higher dimensional geometries, where  
a different analytical approach for the well-posedness is available in 
the studies by Glassey and Schaeffer in,
e.g.,  \cite{Glassey_Schaeffer:2001} and \cite{Glassey_Schaeffer:2000}. 
Numerical 
implementations for this model will appear in the second part: 
\cite{Bondestam_Standar:2017}. We also mention related studies 
\cite{Monk_Sun:2012} and  \cite{Perugia_Schotzau_Monk:2002}
for the Maxwell's equations  
 where stabilized interior penalty method is used.

Problems of this type have been considered by several 
authors in various settings. In this regard, 
theoretical studies for the Vlasov-Maxwell system relevant to our work 
can be found in, e.g.\ \cite{Diperna_Lions:89} for 
treating the global weak solutions, \cite{Yan_Guo:93} 
for global weak solutions with 
boundary conditions and more adequately 
\cite{Gla}-\cite{Glassey_Schaeffer:2000} 
for relativistic models in different geometries. SD methods for
 the hyperbolic partial differential equations have been suggested by 
Hughes and Brooks in \cite{Hughes_Brooks:79}. Mathematical developments can be 
found in \cite{John}. For SD studies relevant to our approach 
see, e.g.,\  \cite{Asadzadeh_Kowalczyk:2005} and the references therein  
some containing also further studies involving discontinuous Galerkin schemes 
and their developments.

An outline of this paper is as follows: In the present Section \ref{sec1}, following 
the introduction, we comment on particular manner of various quantities in the 
Maxwell equations and introduce the relativistic one and one-half 
dimensional model with its well-posedness property. 
In Section \ref{sec2} we introduce some notations and preliminaries.
Sections \ref{sec3} is devoted to stability 
bounds and a posteriori error estimates for
the Maxwell equations in both $H_{-1}(H_{-1})$ and 
$L_\infty(H_{-1})$ norms. 
Sections \ref{sec4} is the counterpart of Section 3 for the Vlasov 
equation which is now performed only in $L_\infty(H_{-1})$ norm.  

Finally, in our concluding Section \ref{sec5}, 
we summarize the results of the paper and discuss some future plans. 

Throughout this note $C$ will denote a generic constant, 
not necessarily the same at each occurrence, and independent of the parameters 
in the equations, unless otherwise explicitly specified. 

The Vlasov-Maxwell (VM) system which describes time evolution of collisionless 
plasma is formulated as 
%\begin{display}
\begin{equation}\label{VMI}
\begin{aligned}
\partial_t f &+\hat v\cdot \nabla_x f+q(E+c^{-1} \hat v\times B)\cdot \nabla_v f=0,\\
\mbox{(Ampere's law)}\quad  &\partial_t E=c \nabla\times B-j,\qquad \nabla\cdot E=\rho, \\
\mbox{(Faraday's law)}\quad  &\partial_t B=-c\nabla\times E, \qquad  \nabla\cdot B=0.
\end{aligned}
\end{equation}
Here $f$ is density, in phase space, of particles with mass $m$, 
charge $q$ and  velocity 
$$
\hat v=(m^2+c^{-2}\vert v\vert^2)^{-1/2} v\qquad (v\,\,  \mbox{is momentum}).
$$
Further, the charge and current densities are given by 
\begin{equation}\label{VMII}
\rho(t,x)=4\pi\int qf\, dv,\quad\mbox{and}\quad 
 j(t,x)= 4\pi\int qf\hat v\, dv, 
\end{equation}
respectively. For a proof of the existence and uniqueness of the solution to 
VM system one may rely on mimicking the Cauchy problem for the 
Vlasov equation through using Schauder fixed point theorem:  
{\sl Insert an assumed and  given   $g$ for $f$ in \eqref{VMII}. 
Compute $\rho_g$, $j_g$ and insert the results in 
Maxwell equations to get $E_g$, $B_g$.  Then 
insert, such obtained, $E_g$ and $B_g$ in the Vlasov equation 
to get $f_g$ via an operator $\Lambda$:  $f_g=\Lambda g$. 
A fixed point of $\Lambda$ is the solution of the Vlasov equation. 
For the discretized version employ, instead, the  Brouwer 
fixed point theorem.
Both these proofs are rather technical and non-trivial.} 
The fixed points argument, 
rely on viewing the equations in the Maxwell's system as being valid 
independent of each others, %which is not a physically correct view.
but the quantities $f$, $B$, $E$, $j$ and $\rho$ are {\sl physically} 
related to each others by 
the Vlasov-Maxwell system of equations and % again, {\sl physically}, 
it is {\sl not the case} 
that some of them are given to determine the others.  
However, in one and one-half geometry, relying on d'Alembert formula 
Schauder/Brouwer fixed point approach, is unnecessary.   
The fixed point approach, which was first introduced by Ukai and Okabe in 
\cite{Ukai_Okabe:78} 
for the Vlasov-Poisson system, is 
performed for the Vlasov-Maxwell system in 
\cite{Standar:2015} in full details and therefore is omitted in here. 

\subsection
{Relativistic model in one and one-half dimensional geometry}
Our objective is to construct and analyze SD discretization schemes 
for the 
{\sl relativistic Vlasov-Maxwell
model in one and one-half dimensional geometry} 
($x\in{\mathbb R}, v\in {\mathbb R}^2$), which then can 
be generalized to higher dimensions: 
\begin{equation} \label{rvm}
\mbox{(RVM)} \qquad\left\{
\begin{aligned}
&\partial_t f +\hat v_1\cdot \partial_x f+ (E_1 + \hat{v}_2 B)\d_{v_1} f + (E_2-\hat{v}_1 B)\d_{v_2} f=0,\\
&\partial_t E_1=- j_1 (t, x) ,\qquad \partial_x E_1=\rho (t, x) = \int_{\Omega_v} f dv - \rho_b (x) , \\
&\partial_t E_2 + \partial_x B=- j_2 (t, x) , \\
&\partial_t B + \partial_x E_2= 0 .
\end{aligned}
\right.
\end{equation}
The system \eqref{rvm} is assigned with the Cauchy data 
\[
f(0,x,v)=f^0(x,v) \geq 0, \,\,\,\, E_2(0,x)=E_2^0(x), \,\,\,\, B(0,x)=B^0(x)
\]
and with 
\[
E_1 (0,x)= \int_{-\infty}^x \Big( \int f^0 (y,v) dv- \rho_b (y)\Big) dy = E_1^0 (x).
\]
This is the only initial data that leads to a finite-energy solution (see \cite{Gla}). In \eqref{rvm}
we have for simplicity set all constants equal to one.
The background density $ \rho_b (x) $ is assumed to be smooth, has compact support and 
is {\sl neutralizing}. This yields 
\begin{equation*}
{\int_{-\infty}^{\infty}\rho(0,x)\, dx=0}. 
\end{equation*} 

To carry out the discrete analysis, we shall need the following 
global existence of 
classical solution due to Glassey and Schaeffer \cite{Gla}.
\begin{theorem}[Glassey, Schaeffer]
Assume that $ \rho_b $, the background density, is neutralizing and we have  
$$
(i) \,\, 0\le f^0(x,v)\in C_0^1({\mathbb R}^3),\qquad 
(ii) \,\, E_2^0,\,\,\, B^0 \in C_0^2({\mathbb R}^1).
$$
Then, there exists a {\sl global} $C^1$ solution for the Relativistic 
Vlasov-Maxwell system. Moreover, if 
$ 0\le f^0\in C_0^r({\mathbb R}^3)$ and 
$E_2^0,\,\,\, B^0 \in C_0^{r+1}({\mathbb R}^1)$, then $(f, E, B)$ is of 
class $C^r$ over ${\mathbb R}^+\times{\mathbb R}\times{\mathbb  R^2}$. 
\end{theorem}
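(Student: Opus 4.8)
\noindent\textit{Proof plan.} Since this is the theorem of Glassey and Schaeffer \cite{Gla}, the plan is to reconstruct their argument, which I would organise in four stages: (i) local existence via an iteration built on characteristics and d'Alembert's formula; (ii) a continuation criterion reducing global existence to an a~priori bound on the momentum support of $f$; (iii) that a~priori bound; and (iv) a bootstrap giving the higher regularity.

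For (i), I would set up an iteration: given fields $(E_1^{(n)},E_2^{(n)},B^{(n)})$, initialised from the Cauchy data, integrate the characteristic system of the Vlasov equation in \eqref{rvm}, $\dot X=\hat v_1$, $\dot V_1=E_1^{(n)}+\hat v_2 B^{(n)}$, $\dot V_2=E_2^{(n)}-\hat v_1 B^{(n)}$, which is well posed because the force is $C^1$ in $(x,v)$, and transport $f^0$ along these curves to define $f^{(n+1)}$; since $|\hat v|<1$ and $f^0$ is compactly supported, $f^{(n+1)}(t,\cdot,\cdot)$ stays compactly supported with $x$-support spreading at speed $<1$. I would then form $\rho^{(n+1)},j_1^{(n+1)},j_2^{(n+1)}$ from \eqref{VMII}, recover $E_1^{(n+1)}$ from $\partial_x E_1=\rho$, $\partial_t E_1=-j_1$ (these being consistent thanks to the continuity equation together with the initial and neutralising conditions), and recover $(E_2^{(n+1)},B^{(n+1)})$ by passing to the Riemann invariants $E_2\pm B$, which satisfy $(\partial_t\pm\partial_x)(E_2\pm B)=-j_2$ and are therefore given explicitly along the lines $x\mp t=\mathrm{const}$. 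A contraction estimate in $C^1$ on a short interval $[0,T_0]$, using compact support and finite speed, then yields a local $C^1$ solution, and uniqueness follows by the same estimate.

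For (ii)--(iii), let $[0,T^*)$ be the maximal interval and set $g(t)=\sup\{|v|:f(s,x,v)\neq0\ \text{for some}\ x\in\R,\ 0\le s\le t\}$. Because $f$ is constant along characteristics, $\|f(t)\|_{L^\infty}=\|f^0\|_{L^\infty}$, and boundedness of $g$ on $[0,T^*)$ forces boundedness of $\rho$, $j$ and, through the representation formulas, of the fields and their first derivatives --- so the solution continues past $T^*$, and it is enough to bound $g$ on bounded time intervals. Conservation of charge gives $\|E_1(t)\|_{L^\infty}\le\|f^0\|_{L^1}+\|\rho_b\|_{L^1}$, and along a particle trajectory the relativistic energy $\gamma=\sqrt{1+|V|^2}$ obeys $\frac{d}{ds}\gamma=\hat V_1E_1+\hat V_2E_2$, with the magnetic contributions cancelling, so $|V|$ grows only through $E$ and it remains to bound $\|E_2(t)\|_{L^\infty}$. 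By the d'Alembert representation this reduces to estimating $\int_0^t|j_2(s,x\pm(t-s))|\,ds$ along a light ray. Here is the hard part, which I expect to be the main obstacle: the crude bound $|j_2|\le\int f\,dv\le\pi g(s)^2\|f^0\|_{L^\infty}$ only gives a Riccati-type inequality for $g$ which may blow up in finite time, so instead one must rewrite the source contribution, using the Vlasov equation, as the integral of a $v$-divergence, integrate by parts in $v$, and exploit $|\hat v_1|<1$ to see that the change of variables from the time variable to the crossing point of the light ray with a particle trajectory is non-degenerate; the resulting kernel is then integrable against $f\,dv$, which yields a \emph{linear} Gronwall inequality for $g$. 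This is precisely where the one and one-half dimensional structure is used. Once $g$ is finite on every $[0,T]$, we get $T^*=\infty$.

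For (iv), with $f^0\in C_0^r$ and $E_2^0,B^0\in C_0^{r+1}$ --- the extra order of smoothness for the field data being the natural compatibility with differentiating the representation formulas $r$ times --- I would differentiate the characteristic ODEs and the d'Alembert formulas up to order $r$; each differentiation closes on the bounds already established, and a Gronwall argument in the $C^r$ norms propagates $C^r$ regularity over all of $\R^+\times\R\times\R^2$.
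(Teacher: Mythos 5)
First, note that the paper does not prove this theorem: it is stated as an imported result of Glassey and Schaeffer and cited from \cite{Gla}, so there is no in-paper proof to compare your plan against. Judged against the argument in \cite{Gla} itself, your outline is structurally faithful: local existence by an iteration on characteristics combined with the d'Alembert/Riemann-invariant representation of $E_2\pm B$, a continuation criterion in terms of the momentum support $g(t)$, the observation that the magnetic force does no work so only $E$ can enlarge $|V|$, and a bootstrap for the $C^r$ statement all match the cited proof.

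The gap is in your stage (iii), which you yourself flag as ``the hard part'': the passage from the Riccati-type inequality to a \emph{linear} Gronwall inequality for $g$ is asserted, not derived, and it is essentially the entire mathematical content of the theorem. Concretely, what is needed (and what \cite{Gla} supplies) is the kernel bound obtained after rewriting $\partial_s\big[f(s,x\mp(t-s),v)\big]$ via the Vlasov equation, namely control of $|\hat v_2|/(1\mp\hat v_1)$; this ratio is \emph{not} uniformly bounded (it degenerates as $v_1\to\pm\infty$), and the proof must pair it with an a priori bound on $\int_0^t\!\!\int (1\mp\hat v_1)^{-1}f\,dv\,ds$ along light rays coming from the conservation laws for the energy and momentum densities. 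Without writing down that identity and checking that the two factors combine into a quantity bounded by $C\,(1+\sup_{s\le t}g(s))$ rather than a higher power of $g$, the claimed linearity is unsupported. A second, smaller gap: the continuation criterion requires bounds on the \emph{first derivatives} of the fields and of $f$, which do not follow from boundedness of $g$ alone but need a separate differentiation of the representation formulas and a second Gronwall loop; you dispose of this in one clause. Neither gap is a wrong turn --- both are filled in \cite{Gla} --- but as written the plan is an accurate road map rather than a proof.
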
  
Note that for the well-posedness of the discrete solution
the existence and uniqueness is due to \cite{Standar:2015}, whereas 
the stability of the approximation scheme is justified 
throughout Sections 3 and 4. 

\section{Assumptions and notations} \label{sec2}

Let $ \Omega_x \subset \R $ and $ \Omega_v \subset \R^2 $ denote the 
space and velocity domains, respectively.
We shall assume that $ f(t,x,v) $ , $ E_2(t,x) $, $ B(t,x) $ and $ \rho_b (x) $ 
have compact supports in $ \Omega_x $ and that $ f(t,x,v) $ 
has compact support in $ \Omega_v $. 
Since we have assumed neutralizing background density, i.e.\ 
$ \int \rho (0,x) dx=0 $, 
it follows that $ E_1 $ also has compact support in $ \Omega_x $ 
(see \cite{Gla}). 

Now we will introduce a finite element structure on 
$ \Omega_x \times \Omega_v $. Let $ T_h^x = \{ \tau_x \} $ 
and $ T_h^v = \{ \tau_v \} $ be finite elements subdivision of 
$ \Omega_x $ with elements $ \tau_x $ and $ \Omega_v $ 
with elements $ \tau_v $, respectively. 
Then $ T_h = T_h^x \times T_h^v = \{ \tau_x \times \tau_v \} = \{ \tau \} $ 
is a subdivision of 
$ \Omega_x \times \Omega_v $. 
Let $ 0= t_0 < t_1 < \ldots < t_{M-1} < t_M=T $ 
be a partition of $ [ 0, T ] $ into sub-intervals 
$ I_m = (t_{m-1}, t_m ] $, $ m= 1, 2, \ldots , M $. 
Further let $ \mathcal{C}_h $ be the corresponding subdivision of 
$ Q_T = [0 ,T ] \times \Omega_x \times \Omega_v $ 
into elements $ K = I_m \times \tau $, with $ h= \textnormal{diam} \, K $ 
as the mesh parameter. Introduce $ \mathcal{\tilde{C}}_h $ 
as the finite element subdivision of 
$ \Qt = [0 , T ] \times \Omega_x $. 
Before we define our finite dimensional spaces we need to introduce
 some function spaces, viz 
\[
\mathcal{H}_0 = 
\prod_{m=1}^{M} H_0^1 (I_m \times \Omega_x \times \Omega_v)
\quad \mbox{and} \quad \mathcal{\tilde{H}}_0 = 
\prod_{m=1}^{M} H^1_0 ( I_m \times \Omega_x ),
\]
where
\[
H^1_0 (I_m \times \Omega) = 
\{ w \in H^1 ; w= 0 \, \, \textnormal{on} \,\, \d \Omega \}.
\]
In the discretization part, for 
 $ k=0,1,2, \ldots $, we define the finite element spaces
\[
V_h= \{ w \in \mathcal{H}_0 ; w|_K \in P_k (I_m ) \times P_k (\tau_x) \times P_k ( \tau_v ) , \, \forall K = I_m \times \tau \in \mathcal{C}_h \}
\]
and
\[
\tilde{V}_h = \{ g \in [\mathcal{\tilde{H}}_0]^3 ; g_i|_{\Kt} \in P_k (I_m) \times P_k (\tau_x ) , \, \forall \Kt = I_m \times \tau_x \in \mathcal{\tilde{C}}_h , \, i=1,2,3\},
\]
where $ P_k ( \cdot ) $ is the set of polynomial with degree at most $ k $ on the given set. We shall also use some notation, viz
\[
(f,g)_m=(f,g)_{S_m}, \qquad \| g \|_m =(g,g)_m^{1/2}
\]
and
\[
\sprod{f}{g}_m=(f(t_m, \ldots), g(t_m, \ldots))_{\Omega}, \qquad |g|_m=\sprod{g}{g}_m^{1/2},
\]
where $ S_m = I_m \times \Omega $, is the slab at $ m $-th level, $
 m=1, 2, \ldots, M $.

To proceed, we shall need to 
perform an iterative procedure: starting with $f^{h,0} $ we compute the fields 
$E_{1}^{h,1},  E_2^{h,1} $ and $ B^{h,1} $ and insert them in the 
Vlasov equation 
to get the numerical approximation $ f^{h,1} $. This will then be 
inserted in the Maxwell equations to get
the fields $E_{1}^{h,2},  E_2^{h,2} $ and $ B^{h,2} $ and so on. 
The iteration step $ i $ yields a Vlasov equation for $ f^{h,i } $ 
with the fields 
$E_{1}^{h,i},  E_2^{h,i} $ and $ B^{h,i }$. We are going to assume 
that this iterative procedure 
converges to the analytic solution of the Vlasov-Maxwell system. 
More specifically, we have assumed that the 
iteration procedure generates Cauchy sequences. 

Finally, due to the lack of dissipativity, 
we shall consider {\sl negative norm estimates}. Below we introduce 
the general form of the function spaces that will be useful 
in stability studies and supply us the adequate environment to derive 
error estimates with higher convergence rates. In this regard: 
Let $\Omega$ be a bounded domain in $\mathbb R^N$, $N \ge 2$.  For 
$m \ge 0$ an integer, $1 \le p \le \infty$ and $G \subseteq \Omega$,
$W^m_p(G)$ denotes the usual Sobolev space of functions with 
distributional 
derivatives of order $\le m$ which are in $L_p(G)$.  Define the 
seminorms
$$
|u|_{W^j_p(G)}
= \left\{ \begin{array}{ll} 
\Big(\ds\sum_{|\alpha|=j} \|D^\alpha u\|^p_{L_p(G)}
\Big)^{1/p}  &\mbox{if} \quad 1 \le p < \infty ,\\
\ds\sum_{|\alpha|=j} \|D^\alpha u\|_{L_\infty(G)}
&\mbox{if} \quad p = \infty,
\end{array}
\right .
$$
and the norms
$$
\|u\|_{W^m_p(G)}
= \left\{ \begin{array}{ll} 
\Big(\ds\sum^m_{j=1} |u|^p_{W_p^j(G)}\Big)^{1/p}
&\text{if} \quad 1 \le p < \infty, \\
\ds\sum^m_{j=1} |u|_{W^j_\infty(G)} &\mbox{if} \quad p = \infty.
\end{array}
\right .
$$
If $m\ge 0$, $W_p^{-m}(G)$ is the completion of 
$C^\infty_0(G)$ under the norm
$$
\|u\|_{W_p^{-m}(G)}
= \sup_{  \begin{array}{ll} 
& \psi\in C^\infty_0(G) % \\ & \|\psi\|_{W^m_q(G)}=1
\end{array}}
\frac{(u, \psi)}{ \|\psi\|_{W^m_q(G)}}, 
%\int_G u\psi dx,\
 \quad  \frac 1p + \frac 1q = 1.
$$
We shall only use the $L_2$-version of the above norm. 
\section{A posteriori error estimates for the Maxwell equations} \label{sec3}

Our main goal in this section is to find an a 
posteriori error estimate for the Maxwell equations. 
Let us first reformulate the relativistic Maxwell system, viz 
\begin{equation} \label{rm}
\left\{ \begin{array}{ll}
\displaystyle \d_x E_1 =\int f dv - \rho_b (x)= \rho(t,x) \\
\displaystyle \d_t E_1 = - \int \hat{v}_1 f dv= - j_1(t,x) \\
\displaystyle \d_t E_2 +\d_x B = -\int \hat{v}_2 f dv=- j_2(t,x) \\
\displaystyle \d_t B + \d_x E_2 =0 .
\end{array} \right.
\end{equation}

Set now 
\[
M_1= \left( \begin{array}{cccc}
0 & 0 & 0 \\
1 & 0 & 0 \\
0 & 1 & 0 \\
0 & 0 & 1 \\
\end{array} \right) , \,\,\,\, 
M_2= \left( \begin{array}{cccc}
1 & 0 & 0 \\
0 & 0 & 0 \\
0 & 0 & 1 \\
0 & 1 & 0 \\
\end{array} \right) .
\]
Let $ W= (E_1, E_2, B)^T $, $ W^0=(E_1^0, E_2^0, B^0) $ and 
$ b=(\rho, - j_1, - j_2, 0)^T $. Then, the Maxwell equations can be 
written in compact (matrix equations) form as 
\begin{equation} \label{maxwell}
\left\{ \begin{array}{ll}
M_1 W_t+ M_2 W_x=b \\
W(0,x)=W^0(x).
\end{array} \right. 
\end{equation} 

The streamline diffusion method on the $ i $th step for the 
Maxwell equations can now be formulated as: find $ W^{h,i} \in \tilde{V}_h $ 
such that for $ m=1, 2, \ldots , M $,
\begin{multline} \label{sdmax}
(M_1 W^{h,i}_t + M_2 W^{h,i}_x, \hat{g}+ \delta (M_1 g_t + M_2 g_x))_m + 
\sprod{W^{h,i}_+}{g_+}_{m-1} \\
= (b^{h,i-1}, \hat{g} + \delta (M_1 g_t + M_2 g_x))_m + 
\sprod{W^{h,i}_-}{g_+}_{m-1}, \,\,\,\,\,\, \forall \, g\in \tilde{V}_h,
\end{multline}
where $ \hat{g} = (g_1, g_1, g_2, g_3)^T $, $ g_\pm (t,x) = 
\lim_{s \rightarrow 0^\pm} g(t+s, x) $ and $ \delta $ is a multiple of $ h $ 
(or a multiple of $ h^\alpha $ for some suitable $ \alpha $), 
see \cite{Johnson} for motivation of choosing $ \delta $.

Now we are ready to start the a posteriori error analysis. 
Let us decompose the error into two parts
\[
W-W^{h,i}= \underbrace{W-W^i}_{\textnormal{analytical iteration error}} + 
\underbrace{W^i-W^{h,i}}_{\textnormal{numerical error}}= \Et^i+\eti ,
\]
where $ W^i $ is the exact solution to the approximated Maxwell 
equations at the $ i $th iteration step: 
\[
M_1 W^i_t+ M_2 W^i_x=b^{h,i-1} .
\]

\subsection{$H^{-1} (H^{-1})$ a posteriori error analysis for the Maxwell equations} 
We will start by estimating the numerical error $\eti$. 
To this end, we formulate the dual problem:
\begin{equation} \label{Dual_maxwell}
\left\{ \begin{array}{l}
-M_1^T \hat\varphi_t- M_2^T \hat\varphi_x= \chi \\
\varphi (T,x)=0.
\end{array} 
\right. 
\end{equation} 
Here $ \chi $ is a function in $ [H^1 ( \tilde{Q}_T )]^3 $. The idea is to use the dual problem
to get an estimate on the $ H^{-1} $-norm of the error $ \eti $.
Multiplying \eqref{Dual_maxwell} with $ \eti $ and integrating over $ \tilde{Q}_T $
we obtain
\begin{equation}\label{DualPFormula1}
(\eti,\chi)=
\sum_{m=1}^M \Big((\eti, -M_1^T\hat\varphi_t)_m + (\eti, -M_2^T\hat\varphi_x)_m \Big), 
\end{equation} 
where 
\begin{equation}\label{DualPFormula2}
\begin{split}
(\eti, -M_1^T\hat\varphi_t)_m = &
-\int_{S_m} \eti\cdot\partial_t(\varphi_1, \varphi_2, \varphi_3)\, dx\, dt\\
 =&
-\int_{S_m} (\eti_1\partial_t\varphi_1+ 
\eti_2\partial_t\varphi_2+\eti_3\partial_t\varphi_3) 
\, dx\, dt \\
 = & -\int_{\Omega_x}
\Big[\sum_{k=1}^3 \eti_k\varphi_k\Big]_{t=t_{m-1}}^{t=t_m}\, dx \\
& +\int_{S_m} (\partial_t \eti_1\varphi_1+\partial_t \eti_2\varphi_2
+\partial_t \eti_3\varphi_3)
\, dx\, dt \\
 =&\langle \eti_+, \varphi_+\rangle_{m-1}-
\langle \eti_-, \varphi_-\rangle_{m}+
(M_1 \eti_t, \hat \varphi)_m. 
\end{split}
\end{equation} 
Likewise, due to the fact that all involved functions have compact support in 
$\Omega_x$, we can write 
\begin{equation}\label{DualPFormula3}
\begin{split}
(\eti, -M_2^T\hat\varphi_x)_m &= 
-\int_{S_m} \eti \cdot\partial_x(\varphi_1, \varphi_2, \varphi_3)\, dx\, dt \\
&=
-\int_{S_m} (\eti_1\partial_x\varphi_1+ \eti_2\partial_x\varphi_2+\eti_3\partial_x\varphi_3) 
\, dx\, dt \\
&=\int_{S_m} (\partial_x \eti_1 \varphi_1+\partial_x \eti_2 \varphi_2+\partial_x \eti_3 \varphi_3) 
\, dx\, dt 
 =
(M_2 \eti_x, \hat\varphi)_m. 
\end{split}
\end{equation} 
Inserting \eqref{DualPFormula2} and \eqref{DualPFormula3} into the 
error norm  \eqref{DualPFormula1}, we get 
\begin{equation*} 
\begin{split}
(\eti,\chi) =&
\sum_{m=1}^M  \langle \eti_+ ,\varphi_+\rangle _{m-1} 
-\langle \eti_-, \varphi_-\rangle_m +(M_1 \eti_t+M_2 \eti_x, \hat\varphi)_m \\
=&\sum_{m=1}^M \langle \eti_+ - \eti_- + \eti_- ,\varphi_+\rangle _{m-1} 
-\langle \eti_-, \varphi_- -\varphi_+ + \varphi_+ \rangle_m \\
&+(M_1 W^i_t+M_2 W^i_x - M_1 W_t^{h,i} - M_2 W_x^{h,i}, \hat\varphi)_m \\
=& \sum_{m=1}^M \langle \eti_- ,\varphi_+ \rangle _{m-1} -
\langle \eti_- , \varphi_+ \rangle_m + \langle [ \eti ] , \varphi_+\rangle_{m-1}
+\langle \eti_-, [\varphi ]\rangle_m\\
&+\Big(b^{h,i-1}- M_1 W_t^{h,i} - M_2 W_x^{h,i} , \hat\varphi \Big)_m.
\end{split}
\end{equation*}
Now since both $\varphi$ and $W$ are continuous we have that 
$[\varphi ]=[W]\equiv 0$ and hence $ [\eti ] =- [W^{h,i} ]$. Thus 
\begin{equation} \label{DualPFormula4}
\begin{split}
(\eti,\chi)=&\langle \eti_- , \varphi_+\rangle_0 - \langle \eti_-, \varphi_+ \rangle_M
+ \sum_{m=1}^M - \langle [W^{h,i} ],\varphi_+\rangle_{m-1}\\
&+ 
\Big(b^{h,i-1}- M_1 W_t^{h,i} - M_2 W_x^{h,i} , \hat\varphi \Big)_m \\
=& \sum_{m=1}^M - \langle [W^{h,i} ],\varphi_+\rangle_{m-1}+ 
\Big(b^{h,i-1}- M_1 W_t^{h,i} - M_2 W_x^{h,i} , \hat\varphi \Big)_m . 
\end{split}
\end{equation} 
Let now $ \tilde{\varphi} $ be an interpolant of $ \varphi $ 
and use \eqref{sdmax} with $ g = \tilde{\varphi} $ to get
\begin{equation} \label{DualPFormula5}
\begin{split}
(\eti,\chi) =& \sum_{m=1}^M  \langle [W^{h,i} ], \tilde{\varphi}_+ - \varphi_+\rangle_{m-1}\\
&+ 
\Big(b^{h,i-1}- M_1 W_t^{h,i} - M_2 W_x^{h,i} , \hat\varphi  - \hat{\tilde{\varphi}} - \delta ( M_1 \tilde{\varphi}_t + M_2 \tilde{\varphi}_x) \Big)_m .
\end{split}
\end{equation}
Now, to proceed we introduce the residuals
$$
\Rt_1^i=b^{h,i-1} - M_1 W_t^{h,i} - M_2 W_x^{h,i}
$$
and
$$
\Rt^i_2 |_{S_m} = \Big( W_+^{h,i} (t_m, x) - W_-^{h,i} (t_m, x) \Big)/h, 
$$
where the latter one is constant in time on each slab.

Further, we shall use two projections, $ P $ and $ \pi $, 
for our interpolants $ \tilde{\varphi} $. 
These projections will be constructed from the local projections
\[
P_m : [L_2 (S_m) ]^3 \rightarrow \tilde{V}_m^h = 
\{ u|_{S_m} ; u \in \tilde{V}^h \}
\]
and
\[
\pi_m : [ L_2 (S_m) ]^3 \rightarrow \Pi_{0,m} = \{ u \in [ L_2 (S_m) ]^3 ; \,
u ( \cdot , x) \,\, \textnormal{is constant on} \,\, I_m, x \in \Omega_x \} ,
\]
defined such that
$$
\int_{\Omega_x}(P_m\varphi)^T\cdot u\, dx=
\int_{\Omega_x}\varphi^T\cdot u\, dx,\qquad \forall u\in \tilde{V}_m^h 
$$
and 
$$
\pi_mu|_{S_m}=\frac{1}{h} \int_{I_m} u(t, \cdot)\, dt .
$$
Now we define $ P $ and $ \pi $, slab-wise, by the formulas 
$$
(P\varphi)|_{S_m}=P_m(\varphi|_{S_m}) \quad \mbox{and} 
\quad (\pi\varphi)|_{S_m}=\pi_m(\varphi|_{S_m}), 
$$
respectively. See Brezzi et al.\ \cite{Brezzi} 
for the details on commuting  differential and 
projection operators in a general setting. 
Now we may choose the interpolants as 
$\tilde\varphi=P \pi \varphi = \pi P \varphi $, 
and write an error representation formula as 
\begin{equation}\label{DualPFormula6}
\begin{split} 
\sum_{m=1}^M&
\Big\langle [W^{h,i}], \varphi_+ - \tilde\varphi_+\Big \rangle_{m-1} = 
\sum_{m=1}^M \Big\langle h\frac{[W^{h,i}]}h, \varphi_+-P \varphi_+ 
+ P \varphi_+ - \tilde\varphi_+ \Big\rangle_{m-1} \\
&= \sum_{m=1}^M \Big\langle h\frac{[W^{h,i}]}h, \varphi_+ 
- P \varphi_+ \Big\rangle_{m-1}
+\sum_{m=1}^M \Big\langle h \frac{[W^{h,i}]}h , P \varphi_+ - 
\tilde\varphi_+\Big\rangle_{m-1}\\
& := J_1+J_2.
\end{split}
\end{equation} 
To estimate $ J_1 $ and $ J_2 $ 
we shall use the following identity
\[
h \varphi_+ (t_{m-1}, x) = \int_{I_m} \varphi (t, \cdot ) dt - \int_{I_m} \int_{t_{m-1}}^t \varphi_s (s, \cdot ) ds dt .
\]
 We estimate each term in the error representation 
formula separately:
\begin{equation}\label{DualPFormula7}
\begin{split} 
J_1 = &\sum_{m=1}^M \Big\langle h\frac{[W^{h,i}]} h, 
\varphi_+-P \varphi_+\Big\rangle_{m-1} \\
= &\sum_{m=1}^M \Big\langle\frac{W_+^{h,i} - 
W_-^{h,i} }{h}, (I-P) h \varphi_+\Big\rangle_{m-1} \\
=& \sum_{m=1}^M \Big\langle \Rt^i_2 , (I-P) 
\Big( \int_{I_m} \varphi (t, \cdot )\, dt -
\int_{I_m} \int_{t_{m-1}}^t \varphi_s(s, \cdot)\, ds\, dt\Big)
\Big\rangle_{m-1}\\
= & \sum_{m=1}^M \Big(\int_{\Omega_x} \Rt_2^i \cdot 
(I-P)\int_{I_m} \varphi (t,x )\, dt\, dx\Big)\\
&- \sum_{m=1}^M \Big(\int_{\Omega_x} \Rt_2^i \cdot 
(I-P)\int_{I_m} \int_{t_{m-1}}^t \varphi_s(s, \cdot)\, ds\, dt \Big)\\
&\le C\norm{h \Rt^i_2}_{L_2 ( \Qt )} \norm{\varphi}_{ H^1 ( \Qt )}, 
\end{split}
\end{equation}
where in the last estimate the, piecewise time-constant, 
 residual is moved inside the 
time integration. 
As for the $J_2$-term we have that 
\begin{equation}\label{DualPFormula8}
\begin{split} 
\sum_{m=1}^M & \Big\langle h\frac{[W^h]}h, 
 P \varphi_+ - \tilde{\varphi}_+ \Big\rangle_{m-1}
=\sum_{m=1}^M \Big\langle \Rt^i_2, 
P h \varphi_+ - h\tilde\varphi_+\Big\rangle_{m-1} \\
&= \sum_{m=1}^M \Big\langle \Rt_2^i, \int_{I_m} P \varphi (t, \cdot)\, dt- 
\int_{I_m} \int_{t_{m-1}}^t P \varphi_s(s, \cdot)\, ds\, dt - 
h \tilde\varphi_+\Big\rangle_{m-1} \\
&= - \sum_{m=1}^M \int_{I_m} \int_{t_{m-1}}^t \langle \Rt^i_2 , P \varphi_s(s, \cdot)
\rangle_{m-1} \, ds\, dt.  
\end{split}
\end{equation} 
Thus we can derive the estimate  
\begin{equation}\label{DualPFormula9} 
\abs{J_2} \le 
C \norm{h \Rt^i_2}_{L_2 ( \Qt )} \norm{P \varphi_t}_{L_2 ( \Qt )}
\le C \norm{h \Rt^i_2}_{L_2 ( \Qt )} \norm{\varphi}_{H^1 ( \Qt )}. 
\end{equation} 
To estimate the second term in \eqref{DualPFormula5} 
we proceed in the following way
\begin{equation}\label{DualPFormula10}
\begin{split}
\sum_{m=1}^M & \Big(b^{h,i-1}-  M_1 W_t^{h,i} - M_2 W_x^{h,i} , \hat\varphi - \hat{\tilde{\varphi}} - \delta ( M_1 \tilde{\varphi}_t + M_2 \tilde{\varphi}_x) \Big)_m \\
= &\sum_{m=1}^M \Big( \Rt_1^i,  \hat{\varphi} - \hat{\tilde{\varphi}} \Big)_m 
- \delta \Big( \Rt_1^i , M_1 \varphi_t + M_2 \varphi_x \Big)_m \\
& + \delta \Big( \Rt_1^i, M_1 (\varphi_t- \tilde{\varphi}_t) 
+ M_2 (\varphi_x - \tilde{\varphi}_x)  \Big)_m \\
\leq & C \norm{\Rt_1^i}_{L_2 ( \Qt )} \norm{\varphi - \tilde{\varphi}}_{L_2 ( \Qt )}
+ C h \norm{\Rt_1^i}_{L_2 ( \Qt )}  \norm{\chi}_{L_2 ( \Qt )} \\
& + C h \norm{\Rt^i_1}_{L_2 ( \Qt )} \big( \norm{\varphi_t - \tilde{\varphi}_t}_{L_2 ( \Qt )} 
+ \norm{\varphi_x - \tilde{\varphi}_x}_{L_2 ( \Qt )} \big) \\
\leq & C h \norm{\Rt_1^i}_{L_2 ( \Qt )} \norm{\varphi}_{H^1 ( \Qt )} 
+ C h \norm{\Rt_1^i}_{L_2 ( \Qt )} \norm{\chi}_{H^1 ( \Qt )}. 
\end{split}
\end{equation} 
Combining \eqref{DualPFormula5}-\eqref{DualPFormula10} yields
\begin{equation*}
\begin{split}
( \eti, \chi ) \leq & C h \norm{\Rt_1^i}_{L_2 ( \Qt )} \norm{\varphi}_{H^1 ( \Qt )}
+ C h \norm{\Rt_1^i}_{L_2 ( \Qt )} \norm{\chi}_{H^1 ( \Qt )} \\
& + C h \norm{\Rt_2^i}_{L_2 ( \Qt )} \norm{\varphi}_{H^1 ( \Qt )}.
\end{split}
\end{equation*}
To get an estimate for the $ H^{-1} $-norm we need to divide both sides
by $ \norm{\chi}_{H^1 ( \Qt )} $ and take the supremum over 
$ \chi \in [H^1 ( \tilde{Q}_T )]^3 $.
We also need the following stability estimate. 
\begin{lemma} \label{stabmax}
There exists a constant $ C $ such that
\[
\norm{\varphi}_{H^1 ( \Qt )} \leq C \norm{\chi}_{H^1 ( \Qt )}.
\]
\end{lemma}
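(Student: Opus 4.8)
The plan is to undo the matrix notation, observe that \eqref{Dual_maxwell} decouples into scalar backward transport equations, and estimate each by a non-dissipative energy argument that loses no derivatives. First I would compute, from the given $M_1,M_2$ and $\hat\varphi=(\varphi_1,\varphi_1,\varphi_2,\varphi_3)^T$, that $M_1^T\hat\varphi=(\varphi_1,\varphi_2,\varphi_3)^T$ and $M_2^T\hat\varphi=(\varphi_1,\varphi_3,\varphi_2)^T$, so that \eqref{Dual_maxwell} is the system
\begin{equation*}
-\partial_t\varphi_1-\partial_x\varphi_1=\chi_1,\qquad -\partial_t\varphi_2-\partial_x\varphi_3=\chi_2,\qquad -\partial_t\varphi_3-\partial_x\varphi_2=\chi_3,
\end{equation*}
each with terminal value $0$ at $t=T$. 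The substitution $p=\varphi_2+\varphi_3$, $q=\varphi_2-\varphi_3$ converts the last two equations into $-\partial_t p-\partial_x p=\chi_2+\chi_3$ and $-\partial_t q+\partial_x q=\chi_2-\chi_3$, again with zero data at $t=T$. Since the map $(\varphi_1,\varphi_2,\varphi_3)\mapsto(\varphi_1,p,q)$ and the companion map on the right-hand sides are invertible with constant coefficients, the resulting norm equivalences reduce the lemma to the corresponding bound for a single scalar equation.

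Next I would treat the generic scalar backward transport problem $-(\partial_t+a\partial_x)w=g$, $a=\pm1$, $w(T,\cdot)=0$, by the energy method. Multiplying by $w$ and integrating over $\Omega_x$, the $\partial_x(w^2)$ term vanishes because $w$ and $g$ have compact support in $\Omega_x$ (finite propagation speed together with the support hypotheses of Section \ref{sec2}), leaving $-\tfrac{d}{dt}\norm{w(t,\cdot)}_{L_2(\Omega_x)}^2\le \norm{w(t,\cdot)}_{L_2(\Omega_x)}^2+\norm{g(t,\cdot)}_{L_2(\Omega_x)}^2$; integrating backward from $t=T$ and invoking Gr\"onwall yields $\sup_t\norm{w(t,\cdot)}_{L_2(\Omega_x)}+\norm{w}_{L_2(\Qt)}\le C\norm{g}_{L_2(\Qt)}$. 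Differentiating the equation in $x$ shows $\partial_x w$ solves a problem of the same form with data $\partial_x g$ and zero terminal value, so $\norm{\partial_x w}_{L_2(\Qt)}\le C\norm{\partial_x g}_{L_2(\Qt)}$, and then $\partial_t w=-a\partial_x w-g$ bounds $\norm{\partial_t w}_{L_2(\Qt)}$ as well; hence $\norm{w}_{H^1(\Qt)}\le C\norm{g}_{H^1(\Qt)}$. Applying this to $w=\varphi_1,p,q$ with $g=\chi_1,\chi_2+\chi_3,\chi_2-\chi_3$, summing, and reverting the change of variables gives $\norm{\varphi}_{H^1(\Qt)}\le C\norm{\chi}_{H^1(\Qt)}$.

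The essential feature — and the reason the bound has this shape — is the complete absence of dissipation: the argument closes only because the dual system is symmetric hyperbolic, so the energy identity reproduces itself after each differentiation and $\varphi$ is controlled in $H^1$ by the \emph{same} $H^1$ norm of $\chi$, with no gain in regularity (which is precisely why the negative-norm machinery is needed elsewhere). In a complete write-up the points requiring care are the vanishing of the spatial boundary contributions, which is exactly where the compact-support/finite-speed information enters, and a routine density argument legitimizing the differentiations and energy identities for $\chi\in[H^1(\Qt)]^3$; if the seminorm version of $\norm{\cdot}_{H^1}$ is used, the residual $\norm{g}_{L_2}$ terms are reabsorbed through the compact support of $\chi$ via a Friedrichs inequality on $\Qt$.
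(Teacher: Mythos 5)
Your proposal is correct, and the first half coincides with the paper's argument: the explicit unpacking of $M_1^T\hat\varphi$ and $M_2^T\hat\varphi$ into the three scalar equations is identical to the paper's system \eqref{DualPex}, and the $L_2$ bound via multiplication by the solution, vanishing of the $\partial_x$ boundary term by compact support, and Gr\"onwall is the same energy argument the paper uses (the paper treats the coupled $(\varphi_2,\varphi_3)$ pair by adding the two energy identities so that the cross term becomes $\partial_x(\varphi_2\varphi_3)$, whereas you first diagonalize with $p=\varphi_2+\varphi_3$, $q=\varphi_2-\varphi_3$; these are equivalent). Where you genuinely diverge is the derivative estimate. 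The paper invokes the explicit d'Alembert representation formulas for $\varphi_1,\varphi_2,\varphi_3$ (quoted from \cite{Standar:2015}), differentiates them in $x$, and applies Cauchy--Schwarz to get $\norm{\d_x\varphi_1}_{L_2(\Qt)}\le T\norm{\d_x\chi_1}_{L_2(\Qt)}$, then recovers $\d_t\varphi_1$ from the equation. You instead differentiate the PDE in $x$, observe that $\d_x w$ solves the same backward transport problem with data $\d_x g$ and zero terminal value, and rerun the energy estimate, recovering $\d_t w$ from the equation as before. Both routes close; the paper's exploits the one-dimensional structure and the availability of closed-form solutions, while yours is self-contained (no appeal to the representation formula) and is the version that would survive in higher dimensions or for general symmetric hyperbolic duals, at the cost of an extra $e^{CT}$-type constant in place of the clean factor $T$. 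Your closing remarks on the vanishing of boundary contributions and on absorbing the $L_2$ remainder when the seminorm convention for $\norm{\cdot}_{H^1}$ is used address exactly the points the paper leaves implicit.
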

\begin{proof}
To estimate the $ H^1 $-norm of $ \varphi $ we first write out the equations for the dual problem explicitly:
\begin{equation} \label{DualPex}
\left\{ \begin{array}{l}
- \d_t \varphi_1 - \d_x \varphi_1 = \chi_1 \\
- \d_t \varphi_2 - \d_x \varphi_3 = \chi_2 \\
- \d_t \varphi_3 - \d_x \varphi_2 = \chi_3 . \\
\end{array} 
\right. 
\end{equation}
We start by estimating the $ L_2 $-norm of $ \varphi $.
Multiply the first equation by $ \varphi_1 $ and integrate over $ \Omega_x $ to get
\[
- \int_{\Omega_x} \d_t \varphi_1 \varphi_1 dx - \int_{\Omega_x} \d_x \varphi_1 \varphi_1 dx = \int_{\Omega_x} \chi_1 \varphi_1 dx .
\]
Standard manipulations yields
\[
- \frac{1}{2} \int_{\Omega_x} \d_t (\varphi_1)^2 dx - \frac{1}{2} \int_{\Omega_x} \d_x (\varphi_1)^2 dx \leq \norm{\chi_1}_{L_2 (\Omega_x )} \norm{\varphi_1}_{L_2 ( \Omega_x )} .
\]
The second integral vanishes because $ \varphi_1 $ is zero on the boundary of $ \Omega_x $. We therefore have the following inequality
\[
- \d_t \norm{\varphi_1}_{L_2 ( \Omega_x )}^2 \leq \norm{\chi_1}^2_{L_2 (\Omega_x )} + \norm{\varphi_1}_{L_2 ( \Omega_x )}^2 .
\]
Integrate over $ (t, T ) $ to get
\[
\norm{ \varphi_1 (t, \cdot )}^2_{L_2 (\Omega_x )} \leq 
\norm{\chi_1 }^2_{L_2 ( \Qt )} + \int_t^T \norm{ \varphi_1 (s, \cdot )}^2_{L_2 (\Omega_x )} ds .
\]
Applying Gr\"{o}nwall's inequality and then integrating over $ (0, T) $ we end up with the stability estimate
\[
\norm{\varphi_1 }_{L_2 ( \Qt )} \leq \sqrt{T} e^{T/2} \norm{\chi_1}_{L_2 ( \Qt )} .
\]
Similarly we estimate the second and third component of $ \varphi $ 
as follows: We 
multiply the second and the third equations of \eqref{DualPex} 
with $ \varphi_2 $ and $ \varphi_3 $, respectively. 
Adding the resulting equations and integrating over $ \Omega_x $, yields 
the equation
\begin{equation}\label{Stab4.2}
- \int_{\Omega_x} \varphi_2 \d_t \varphi_2  + 
\varphi_2  \d_x \varphi_3 + \varphi_3 \d_t \varphi_3  + 
\varphi_3 \d_x \varphi_2  dx = 
\int_{\Omega_x} \chi_2 \varphi_2 + \chi_3 \varphi_3 dx .
\end{equation}
We may rewrite \eqref{Stab4.2} as
\begin{equation}\label{Stab4.3}
\begin{split}
- \frac{1}{2} \int_{\Omega _x} \d_t (\varphi_2 )^2 + \d_t (\varphi_3 )^2 + 2 \d_x (\varphi_2 \varphi_3 ) dx \leq & 
\norm{\chi_2}_{L_2 (\Omega_x )} \norm{\varphi_2}_{L_2 ( \Omega_x )} \\
&+ \norm{\chi_3}_{L_2 (\Omega_x )} \norm{\varphi_3}_{L_2 ( \Omega_x )}.
\end{split}
\end{equation}
Note that the third term on the left hand side of \eqref{Stab4.3} 
is identically equal to zero because both $ \varphi_2 $ and $ \varphi_3 $ 
vanish at the boundary of $ \Omega_x $. We therefore have the following inequality
\[
- \d_t \left( \norm{\varphi_2}^2_{L_2 (\Omega_x )} + \norm{\varphi_3}^2_{L_2 (\Omega_x )} \right) \leq \norm{\chi_2}^2_{L_2 (\Omega_x )} +  \norm{\varphi_2}^2_{L_2 ( \Omega_x )} + \norm{\chi_3}^2_{L_2 (\Omega_x )} + \norm{\varphi_3}^2_{L_2 ( \Omega_x )} .
\]
Integrating over $ (t, T) $ we get that   
\begin{equation*}
\begin{split}
\norm{\varphi_2 (t, \cdot) }^2_{L_2 (\Omega_x )} 
+ \norm{\varphi_3 (t, \cdot )}^2_{L_2 (\Omega_x )} \leq &
\norm{\chi_2}^2_{L_2 ( \Qt )} + \norm{\chi_3 }^2_{L_2 ( \Qt )} \\
&+  \int_t^T \norm{\varphi_2 (s,\cdot) }^2_{L_2 ( \Omega_x )}  
+ \norm{\varphi_3 (s,\cdot) }^2_{L_2 ( \Omega_x )} ds .
\end{split}
\end{equation*} 
Applying Gr\"{o}nwall's inequality and then integrating over 
$ (0, T ) $ we end up with the stability estimate
\[
\norm{\varphi_2}_{L_2 ( \Qt )} + \norm{\varphi_3}_{L_2 ( \Qt )} 
\leq \sqrt{T} e^{T/2} ( \norm{\chi_2}_{L_2 ( \Qt )} 
+ \norm{\chi_3}_{L_2 ( \Qt )} ) .
\]
Next we need to prove that the $ L_2 $-norms of the derivatives of 
$ \varphi $ are bounded by
$ \norm{\chi}_{H^1 ( \Qt )} $. To do this we first note that 
$ \varphi $ has analytical solutions, see \cite{Standar:2015}, 
\begin{equation}
\begin{split}
\varphi_1 (t, x) = & \int_t^T \chi_1 (s, x + s - t) ds, \\
\varphi_2 (t, x) = & \frac{1}{2} \int_t^T \chi_2 (s, x + s - t) 
+ \chi_3 (s, x + s - t) \\
& \qquad + \chi_2 (s, x + t - s) - \chi_3 (s, x + t - s) \, ds, \\
 \varphi_3 (t, x) = & \frac{1}{2} \int_t^T \chi_2 (s, x + s - t) 
+ \chi_3 (s, x + s - t) \\
& \qquad - \chi_2 (s, x + t - s) + \chi_3 (s, x + t - s) \, ds.
\end{split}
\end{equation}
Let us start by estimating the $ x $-derivative of $ \varphi_1 $. 
By the above formula for
$ \varphi_1 $ we have that
\[
\frac{\d \varphi_1}{\d x} (t, x) = \int_t^T \frac{\d \chi_1}{\d x} (s, x + s - t) \, ds.
\]
Cauchy-Schwartz inequality and a suitable change of variables yields
\[
\begin{split}
\int_{\Omega_x} \left( \frac{\d \varphi_1}{\d x} (t, x) \right)^2 dx \leq &
T \int_{\Omega_x} \int_t^T \left| 
\frac{\d \chi_1}{\d x} (s, x + s - t) \right|^2 ds \, dx \\
\leq & T \int_0^T \int_{\Omega_x} 
\left| \frac{\d \chi_1}{\d x} (s, y) \right|^2 dy \, ds.
\end{split}
\]
Integrating both sides of the inequality over $ (0, T) $, gives the estimate
\[
\norm{\d_x \varphi_1}_{L_2 ( \Qt )} \leq T \norm{\d_x \chi_1}_{L_2 ( \Qt )}.
\]
Now we can use this inequality together with the first equation in 
\eqref{DualPex}
to get an estimate for the time derivative of $ \varphi_1 $:
\[
\norm{\d_t \varphi_1}_{L_2 ( \Qt )} 
\leq T \norm{\d_x \chi_1}_{L_2 ( \Qt )}
+ \norm{\chi_1}_{L_2 ( \Qt )}.
\]
Similar estimates can be derived for the derivatives of $ \varphi_2 $ and 
$ \varphi_3 $.
We omit the details and refer to 
the estimations of the derivatives for $ \varphi_1 $.
\end{proof} 
Summing up we have proved following estimate 
for the numerical error $ \eti $.
\begin{theorem}[A posteriori error] \label{ApostMaxHH}
There exists a constant $ C $ such that
\[
\norm{\eti}_{H^{-1} ( \Qt )} \leq C \left( \norm{h \Rt_1^i}_{L_2 ( \Qt )} + \norm{h \Rt_2^i}_{L_2 ( \Qt )} \right).
\]
\end{theorem}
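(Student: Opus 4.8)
The plan is to read off the $H^{-1}(\Qt)$ bound on the numerical error $\eti$ directly from the duality computation that precedes Lemma~\ref{stabmax}, so that essentially no new argument is needed beyond bookkeeping. The point is that the chain \eqref{DualPFormula5}--\eqref{DualPFormula10}, for an arbitrary right-hand side $\chi\in[H^1(\Qt)]^3$ of the dual problem \eqref{Dual_maxwell}, already yields
\[
(\eti,\chi)\le Ch\norm{\Rt_1^i}_{L_2(\Qt)}\norm{\varphi}_{H^1(\Qt)}+Ch\norm{\Rt_1^i}_{L_2(\Qt)}\norm{\chi}_{H^1(\Qt)}+Ch\norm{\Rt_2^i}_{L_2(\Qt)}\norm{\varphi}_{H^1(\Qt)},
\]
where $\varphi$ solves \eqref{Dual_maxwell} with data $\chi$. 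What remains is to eliminate $\varphi$ and to recognize the resulting quantity as a negative norm.

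First I would apply Lemma~\ref{stabmax} to replace $\norm{\varphi}_{H^1(\Qt)}$ by $C\norm{\chi}_{H^1(\Qt)}$ in every term, collapsing the bound to
\[
(\eti,\chi)\le C\bigl(\norm{h\Rt_1^i}_{L_2(\Qt)}+\norm{h\Rt_2^i}_{L_2(\Qt)}\bigr)\norm{\chi}_{H^1(\Qt)}.
\]
Since the right-hand side does not change under $\chi\mapsto-\chi$, the same estimate holds for $\abs{(\eti,\chi)}$. Then I would divide by $\norm{\chi}_{H^1(\Qt)}$ and take the supremum over nonzero $\chi\in[H^1(\Qt)]^3$; because $[C_0^\infty(\Qt)]^3$ is dense in this space, the supremum agrees with the one defining the $L_2$-based negative norm introduced at the end of Section~\ref{sec2}, which gives precisely $\norm{\eti}_{H^{-1}(\Qt)}\le C(\norm{h\Rt_1^i}_{L_2(\Qt)}+\norm{h\Rt_2^i}_{L_2(\Qt)})$.

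I do not expect a genuine obstacle here, since this is assembly rather than new analysis; the only place that needs a little care is matching conventions. The negative norm in Section~\ref{sec2} tests against $C_0^\infty$ with an $H^1$-seminorm in the denominator, whereas the duality computation tests against $H^1$ with the full $H^1$-norm, so passing between the two uses density together with the Poincar\'e-type equivalence of the $H^1$-seminorm and the full $H^1$-norm on compactly supported functions, which is available because all data have compact support in $\Omega_x$. A second, already implicit, point is that Lemma~\ref{stabmax} must be read componentwise and summed, which is exactly how it is stated. Once these conventions are aligned, the theorem follows by chaining \eqref{DualPFormula5}--\eqref{DualPFormula10}, Lemma~\ref{stabmax}, and the definition of the negative norm.
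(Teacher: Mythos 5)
Your proposal is correct and follows essentially the same route as the paper: the paper likewise combines \eqref{DualPFormula5}--\eqref{DualPFormula10} into the bound $(\eti,\chi)\le Ch\norm{\Rt_1^i}_{L_2(\Qt)}\norm{\varphi}_{H^1(\Qt)}+Ch\norm{\Rt_1^i}_{L_2(\Qt)}\norm{\chi}_{H^1(\Qt)}+Ch\norm{\Rt_2^i}_{L_2(\Qt)}\norm{\varphi}_{H^1(\Qt)}$, divides by $\norm{\chi}_{H^1(\Qt)}$, takes the supremum over $\chi$, and invokes Lemma~\ref{stabmax}. Your extra care about reconciling the $C_0^\infty$/seminorm convention of the negative norm in Section~\ref{sec2} with the $H^1$ duality pairing is a detail the paper passes over silently, but it does not change the argument.
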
 
As for the iterative error $ \Et^i $ we assume that $ W^i $ 
converges to the analytic solution,  
so that, for sufficiently large $i$, 
$\eti$ is the dominating part of the error $ W - W^{h, i} $, 
see \cite{Standar:2015} for motivation of the iteration assumption.
Therefore, for large enough $i$,
we have that 
\begin{equation}\label{AApriori}
\norm{\Et^i }_{H^{-1} ( \Qt )} <<  \norm{\eti}_{H^{-1} ( \Qt )}. 
\end{equation} 
This together with Theorem \ref{ApostMaxHH} yields the following result: 
\begin{corollary} 
There exists a constant $ C $ such that
\[
\norm{W- W^{h,i}}_{H^{-1} ( \Qt )} \leq C \Big(
\norm{h \Rt^i_1}_{L_2 ( \Qt )} + \norm{h \Rt_2^i}_{L_2 ( \Qt )} \Big) .
\]
\end{corollary}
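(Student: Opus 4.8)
The plan is to obtain the bound for the full error $W-W^{h,i}$ by combining the a posteriori estimate already established for the numerical error $\eti$ with the iteration hypothesis on the analytical iteration error $\Et^i$. Concretely, I would start from the splitting
\[
W-W^{h,i}=\Et^i+\eti
\]
introduced just before Theorem~\ref{ApostMaxHH}, and apply the triangle inequality in the $H^{-1}(\Qt)$ norm to get
\[
\norm{W-W^{h,i}}_{H^{-1}(\Qt)}\le \norm{\Et^i}_{H^{-1}(\Qt)}+\norm{\eti}_{H^{-1}(\Qt)}.
\]

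For the second term, Theorem~\ref{ApostMaxHH} applies verbatim and yields
\[
\norm{\eti}_{H^{-1}(\Qt)}\le C\big(\norm{h\Rt_1^i}_{L_2(\Qt)}+\norm{h\Rt_2^i}_{L_2(\Qt)}\big),
\]
with $C$ independent of $h$ and $i$. The only remaining point is the first term $\norm{\Et^i}_{H^{-1}(\Qt)}$, which measures the distance between the exact Maxwell solution $W$ and the exact solution $W^i$ of the Maxwell system driven by the previous-iterate source $b^{h,i-1}$. This is where the iteration assumption enters: by hypothesis the fixed-point iteration generates Cauchy sequences converging to the analytic Vlasov--Maxwell solution (see \cite{Standar:2015}), so for $i$ large enough the relation \eqref{AApriori} holds, i.e.\ $\norm{\Et^i}_{H^{-1}(\Qt)}\ll\norm{\eti}_{H^{-1}(\Qt)}$. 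Hence the $\Et^i$ contribution can be absorbed into the right-hand side at the cost of enlarging the generic constant $C$, and the claimed estimate follows. No further computation is needed beyond Theorem~\ref{ApostMaxHH} and the triangle inequality.

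The main obstacle — and the reason the statement is phrased as a corollary under a standing assumption rather than proved unconditionally — is precisely the control of $\Et^i$: unlike $\eti$, it is not a computable quantity, and bounding it rigorously requires a stability/contraction property of the iteration map $\Lambda$ (the composition Maxwell $\to$ Vlasov $\to$ Maxwell). In the one-and-one-half dimensional geometry such a property is in principle accessible through the d'Alembert representation, but here it is taken as a hypothesis. In a fuller treatment one would either quantify the contraction rate of $\Lambda$ so as to make \eqref{AApriori} explicit in $i$, or, as is done implicitly in this section, simply record that for $i\ge i_0$ the iteration error is dominated by $\norm{\eti}_{H^{-1}(\Qt)}$ and fold it into the constant $C$.
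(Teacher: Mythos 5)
Your proposal is correct and matches the paper's own argument exactly: the paper likewise splits $W-W^{h,i}=\Et^i+\eti$, invokes Theorem~\ref{ApostMaxHH} for $\eti$, and uses the standing assumption \eqref{AApriori} that the iteration error is dominated by the numerical error to absorb $\Et^i$ into the constant. Nothing is missing.
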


\subsection{$L_\infty(H^{-1})$ a posteriori error analysis for the 
Maxwell equations} 
In this part we perform a $ L_\infty (H^{-1} ) $ error estimate. 
The interest in this norm is partially due to the fact that the Vlasov
part is studied in the same environment.
To proceed we formulate a new dual problem as
\begin{equation} \label{DualMaxII}
\left\{ \begin{array}{l}
-M_1^T \hat\varphi_t- M_2^T \hat\varphi_x= 0 \\
\varphi (T,x)= \chi (x) ,
\end{array} 
\right. 
\end{equation}
where $ \chi \in [H^1 (\Omega_x)]^3 $.
We multiply $ \eti (T, x) $ by $ \chi $ and integrate over $ \Omega_x $ to get
\[
\sprod{\eti}{\chi}_M = \sprod{\eti_-}{\varphi_+}_M + ( \eti , - M_1^T \hat{\varphi}_t - M_2^T \hat{\varphi}_x ) .
\]
Using \eqref{DualPFormula2} and \eqref{DualPFormula3} the above identity can be written as
\[
\begin{split}
\sprod{\eti}{\chi}_M = & \sprod{\eti_-}{\varphi_+}_M \\ 
& + \sum_{m=1}^M \Big(\langle \eti_+ ,\varphi_+\rangle _{m-1} 
-\langle \eti_-, \varphi_-\rangle_m +(M_1 \eti_t+M_2 \eti_x, \hat\varphi)_m\Big).
\end{split}
\]
With similar manipulations as in \eqref{DualPFormula4}, this equation simplifies to
\[
\sprod{\eti}{\chi}_M = \sum_{m=1}^M - \langle [W^{h,i} ],\varphi_+\rangle_{m-1}+ 
\Big(b^{h,i-1}- M_1 W_t^{h,i} - M_2 W_x^{h,i} , \hat\varphi \Big)_m .
\]
Following the proof of Theorem \ref{ApostMaxHH} we end up with the following result:
\begin{theorem}
There exists a constant $ C $ such that
\begin{equation*}
\norm{\eti (T,\cdot )}_{H^{-1} (\Omega_x )} 
\le C \Big( \norm{h \Rt^i_1}_{L_2 ( \Qt )}
+ \norm{h \Rt_2^i}_{L_2 ( \Qt )} \Big).
\end{equation*}
\end{theorem}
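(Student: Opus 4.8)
The plan is to mirror, essentially line for line, the duality argument behind Theorem~\ref{ApostMaxHH}; I will only point out the two places where the present situation differs.

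First I would start from the identity obtained just above the statement,
\[
\sprod{\eti}{\chi}_M = \sum_{m=1}^M \Big(-\sprod{[W^{h,i}]}{\varphi_+}_{m-1} + \big(b^{h,i-1} - M_1 W^{h,i}_t - M_2 W^{h,i}_x, \hat\varphi\big)_m\Big),
\]
where now $\varphi$ solves the dual problem \eqref{DualMaxII} with terminal data $\chi \in [H^1(\Omega_x)]^3$. Exactly as in the passage from \eqref{DualPFormula4} to \eqref{DualPFormula5}, I would insert the interpolant $\tilde\varphi = P\pi\varphi = \pi P\varphi$ as the test function $g$ in the streamline diffusion scheme \eqref{sdmax}; by Galerkin orthogonality the right-hand side becomes $\sum_m \sprod{[W^{h,i}]}{\tilde\varphi_+ - \varphi_+}_{m-1}$ plus the residual contribution $\sum_m \big(\Rt_1^i,\, \hat\varphi - \hat{\tilde\varphi} - \delta(M_1\tilde\varphi_t + M_2\tilde\varphi_x)\big)_m$, with $\Rt_1^i,\Rt_2^i$ the residuals introduced in Section~\ref{sec3}.

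Next I would estimate these two groups exactly as in \eqref{DualPFormula6}--\eqref{DualPFormula10}. The jump term splits as $J_1+J_2$ by means of the commuting projections $P,\pi$ and the identity $h\varphi_+(t_{m-1},x) = \int_{I_m}\varphi\,dt - \int_{I_m}\int_{t_{m-1}}^t \varphi_s\,ds\,dt$, which yields $\abs{J_1}+\abs{J_2} \le C\norm{h\Rt_2^i}_{L_2(\Qt)}\norm{\varphi}_{H^1(\Qt)}$; the residual contribution, split into its interpolation part and the two $\delta$-weighted parts, is bounded by $C\norm{h\Rt_1^i}_{L_2(\Qt)}\norm{\varphi}_{H^1(\Qt)}$. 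The first (minor) difference from Theorem~\ref{ApostMaxHH} enters here: since the forcing in \eqref{DualMaxII} vanishes, the combination $M_1\varphi_t + M_2\varphi_x = (\d_x\varphi_1, -\d_x\varphi_1, 0, 0)^T$ is already controlled by $\norm{\varphi}_{H^1(\Qt)}$ alone, so the $\norm{\chi}_{L_2(\Qt)}$ term appearing in \eqref{DualPFormula10} is absent. Summing, $\sprod{\eti}{\chi}_M \le C\big(\norm{h\Rt_1^i}_{L_2(\Qt)} + \norm{h\Rt_2^i}_{L_2(\Qt)}\big)\norm{\varphi}_{H^1(\Qt)}$.

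The step I expect to be the real obstacle is the second difference: upgrading $\norm{\varphi}_{H^1(\Qt)}$ to $\norm{\chi}_{H^1(\Omega_x)}$, i.e. proving the analogue of Lemma~\ref{stabmax} for the \emph{homogeneous} dual system \eqref{DualMaxII} ($-\d_t\varphi_1 - \d_x\varphi_1 = 0$, $-\d_t\varphi_2 - \d_x\varphi_3 = 0$, $-\d_t\varphi_3 - \d_x\varphi_2 = 0$, with $\varphi(T,\cdot)=\chi$). I would argue as in Lemma~\ref{stabmax}: the backward energy identity --- multiply by $\varphi$, integrate over $\Omega_x$, discard the $\d_x$-terms using compact support, integrate over $(t,T)$, and apply Gr\"onwall --- gives $\sup_t \norm{\varphi(t,\cdot)}_{L_2(\Omega_x)} \le C\norm{\chi}_{L_2(\Omega_x)}$; since \eqref{DualMaxII} has constant coefficients, $\d_x\varphi$ solves the same homogeneous system with terminal data $\d_x\chi$, so $\sup_t \norm{\d_x\varphi(t,\cdot)}_{L_2(\Omega_x)} \le C\norm{\d_x\chi}_{L_2(\Omega_x)}$, and $\d_t\varphi$ is then controlled pointwise in $t$ directly from the equations; integrating in $t$ over $(0,T)$ gives $\norm{\varphi}_{H^1(\Qt)} \le C\norm{\chi}_{H^1(\Omega_x)}$. (Alternatively, for \eqref{DualMaxII} the d'Alembert representations of $\varphi_1,\varphi_2,\varphi_3$ reduce to pure translations of the components of $\chi$ along the characteristic lines, and the bound follows by a change of variables.) Finally, combining with the previous display yields
\[
\sprod{\eti}{\chi}_M \le C\big(\norm{h\Rt_1^i}_{L_2(\Qt)} + \norm{h\Rt_2^i}_{L_2(\Qt)}\big)\norm{\chi}_{H^1(\Omega_x)};
\]
since $\sprod{\eti}{\chi}_M = (\eti(T,\cdot),\chi)_{\Omega_x}$, dividing by $\norm{\chi}_{H^1(\Omega_x)}$ and taking the supremum over $0\ne\chi\in[H^1(\Omega_x)]^3$ gives the assertion by the $L_2$-based definition of the $H^{-1}(\Omega_x)$-norm. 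Apart from the stability estimate for \eqref{DualMaxII}, the proof is a transcription of the preceding subsection.
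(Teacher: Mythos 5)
Your proposal is correct and follows essentially the same route as the paper: the same dual problem \eqref{DualMaxII}, the same error representation and interpolant insertion as in the proof of Theorem~\ref{ApostMaxHH}, and the analogue of Lemma~\ref{stabmax} with data $\chi\in[H^1(\Omega_x)]^3$ (whose proof the paper omits as ``similar''; your constant-coefficient/d'Alembert argument is exactly the kind of argument intended). The two ``differences'' you flag are accurately identified and handled as the paper implicitly does.
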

In the proof of this theorem we use the stability estimate:
\begin{lemma}
There exists a constant $ C $ such that
\[
\norm{\varphi}_{H^1 ( \Qt )} \leq C \norm{\chi}_{H^1 (\Omega_x )}.
\]
\end{lemma}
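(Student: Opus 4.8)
The plan is to mirror, component-by-component, the $L_2$-estimates already carried out in the proof of Lemma~\ref{stabmax}, but now keeping track of the terminal data $\chi$ instead of the source term. First I would write out the dual system \eqref{DualMaxII} explicitly: since the right-hand side is zero, the characteristic structure gives $-\d_t\varphi_1-\d_x\varphi_1=0$ together with the $2\times2$ hyperbolic block $-\d_t\varphi_2-\d_x\varphi_3=0$, $-\d_t\varphi_3-\d_x\varphi_2=0$, now subject to $\varphi(T,x)=\chi(x)$ rather than $\varphi(T,x)=0$. Multiplying the first equation by $\varphi_1$ and integrating over $\Omega_x$, the $\d_x$-term drops by compact support, yielding $-\d_t\norm{\varphi_1(t,\cdot)}_{L_2(\Omega_x)}^2\le 0$, whence $\norm{\varphi_1(t,\cdot)}_{L_2(\Omega_x)}\le\norm{\chi_1}_{L_2(\Omega_x)}$ for all $t\le T$; integrating in $t$ over $(0,T)$ gives $\norm{\varphi_1}_{L_2(\Qt)}\le\sqrt{T}\,\norm{\chi_1}_{L_2(\Omega_x)}$. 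The same manipulation on the $(\varphi_2,\varphi_3)$ block — add the two equations tested against $\varphi_2$ and $\varphi_3$, note the cross term $\d_x(\varphi_2\varphi_3)$ integrates to zero — gives the analogous bound for $\norm{\varphi_2}_{L_2(\Qt)}+\norm{\varphi_3}_{L_2(\Qt)}$.

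Next I would bound the spatial and temporal derivatives. The cleanest route is again via the d'Alembert representation: for the homogeneous equation with terminal data $\chi$ one has $\varphi_1(t,x)=\chi_1(x+T-t)$, and for the block $\varphi_2(t,x)=\tfrac12\big(\chi_2(x+T-t)+\chi_3(x+T-t)+\chi_2(x-T+t)-\chi_3(x-T+t)\big)$ with a sign-flipped analogue for $\varphi_3$ (these are the $s=T$, source-free specializations of the formulas displayed before Lemma~\ref{stabmax}). Differentiating in $x$ and applying Cauchy--Schwarz together with the measure-preserving change of variables $y=x\pm(T-t)$ gives $\norm{\d_x\varphi_i}_{L_2(\Qt)}\le C\norm{\d_x\chi}_{L_2(\Omega_x)}$; then the PDE itself, $\d_t\varphi_i=-\d_x(\text{another component})$, converts this into the matching bound on $\norm{\d_t\varphi_i}_{L_2(\Qt)}$. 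Summing the $L_2$-norms of $\varphi$ and all its first derivatives produces $\norm{\varphi}_{H^1(\Qt)}\le C\norm{\chi}_{H^1(\Omega_x)}$, with $C$ depending only on $T$.

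The main obstacle is a minor regularity bookkeeping issue rather than a conceptual one: the energy argument controls $\norm{\varphi(t,\cdot)}_{L_2}$ by $\norm{\chi}_{L_2(\Omega_x)}$ pointwise in $t$, but to get the $H^1(\Qt)$-norm of $\varphi$ one needs the derivative estimates in terms of $\norm{\chi}_{H^1(\Omega_x)}$, and here one must be a little careful that differentiating the d'Alembert formula is legitimate — this is exactly where $\chi\in[H^1(\Omega_x)]^3$ is used, and by a density argument it suffices to prove the inequality for smooth $\chi$ and pass to the limit. A second small point is that the terminal-to-initial time reversal $t\mapsto T-t$ must be applied consistently; once that is in place, every step is identical in structure to the corresponding step in Lemma~\ref{stabmax}, so I would simply indicate the first component in full and remark that $\varphi_2,\varphi_3$ are treated identically, exactly as the paper does for Lemma~\ref{stabmax}.
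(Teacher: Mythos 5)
Your proposal is correct and follows exactly the route the paper intends: the paper omits this proof with the remark that it is ``similar to that of Lemma~\ref{stabmax}'', and your argument is precisely that adaptation --- the energy identity for the $L_2$-bounds (with the terminal datum $\chi$ now supplying the right-hand side of the Gr\"onwall-free inequality) and the source-free d'Alembert representations $\varphi_1(t,x)=\chi_1(x+T-t)$, etc., for the first-derivative bounds. No gaps.
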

The proof is similar to that of Lemma \ref{stabmax} and therefore is omitted.
With the same assumption on the iteration error $ \Et^i $ as in \eqref{AApriori},
the numerical error $ \eti $ will be dominant and we have the following final result:
\begin{corollary}
There exists a constant $ C $ such that
\begin{equation*}
\norm{W (T,\cdot ) - W^{h, i} (T,\cdot )}_{H^{-1} (\Omega_x )} 
\le C \Big( \norm{h \Rt^i_1}_{L_2 ( \Qt )}
+ \norm{h \Rt_2^i}_{L_2 ( \Qt )} \Big).
\end{equation*}
\end{corollary}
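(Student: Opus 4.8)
The plan is to combine the error splitting introduced at the beginning of Section~\ref{sec3} with the $L_\infty(H^{-1})$ a posteriori bound for the numerical component just established. Recall that we write
\[
W(T,\cdot) - W^{h,i}(T,\cdot) = \Et^i(T,\cdot) + \eti(T,\cdot),
\]
where $\Et^i = W - W^i$ is the analytical iteration error and $\eti = W^i - W^{h,i}$ is the numerical error at the $i$th step. By the triangle inequality in $H^{-1}(\Omega_x)$ it suffices to control each piece separately, and the whole argument reduces to an assembly step once the two preceding results are in hand.

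For the numerical part I would invoke the preceding theorem directly: $\norm{\eti(T,\cdot)}_{H^{-1}(\Omega_x)} \le C\big(\norm{h\Rt_1^i}_{L_2(\Qt)} + \norm{h\Rt_2^i}_{L_2(\Qt)}\big)$. Its derivation rests on the dual problem \eqref{DualMaxII}, the error representation obtained after integration by parts (the analogue of \eqref{DualPFormula4}), the Galerkin orthogonality \eqref{sdmax} tested against the interpolant $\tilde\varphi = P\pi\varphi$, the interpolation and projection estimates for $\varphi - \tilde\varphi$ in terms of $\norm{\varphi}_{H^1(\Qt)}$, and the stability estimate $\norm{\varphi}_{H^1(\Qt)} \le C\norm{\chi}_{H^1(\Omega_x)}$ of the lemma following that theorem. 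Dividing by $\norm{\chi}_{H^1(\Omega_x)}$ and taking the supremum over $\chi \in [H^1(\Omega_x)]^3$ then yields the claimed $H^{-1}(\Omega_x)$ bound on $\eti(T,\cdot)$.

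For the iteration part I would use the standing assumption that the fixed-point iteration $W^i \to W$ generates Cauchy sequences converging to the analytic solution (as in \eqref{AApriori} and \cite{Standar:2015}); hence for $i$ large enough $\norm{\Et^i(T,\cdot)}_{H^{-1}(\Omega_x)} \ll \norm{\eti(T,\cdot)}_{H^{-1}(\Omega_x)}$, so this term is absorbed into the right-hand side at the cost of enlarging $C$. Adding the two contributions gives the corollary.

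The main obstacle is not in this final assembly — essentially a triangle inequality plus the two preceding results — but in the dual stability lemma feeding it: obtaining the $H^1(\Qt)$ bound on $\varphi$ in terms of $\norm{\chi}_{H^1(\Omega_x)}$ with no dissipative mechanism available. As in Lemma~\ref{stabmax}, this is handled by the explicit d'Alembert-type representation of $\varphi$ valid in the one and one-half dimensional geometry, together with Cauchy--Schwarz, changes of variables along characteristics, and Gr\"onwall's inequality; the boundary terms drop out since all fields have compact support in $\Omega_x$. One should also confirm that the constant $C$ is independent of $h$ and of the iteration index $i$, which holds because the only $i$-dependent quantities are the residuals $\Rt_1^i$ and $\Rt_2^i$, while the dual stability constant depends only on $T$.
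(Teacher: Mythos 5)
Your proposal is correct and follows essentially the same route as the paper: the corollary is obtained by splitting $W-W^{h,i}=\Et^i+\eti$, invoking the preceding $L_\infty(H^{-1})$ theorem for the numerical error $\eti(T,\cdot)$, and absorbing the iteration error $\Et^i$ via the standing assumption \eqref{AApriori} that it is dominated by $\eti$ for $i$ large. Your additional remarks on the dual stability lemma and the $i$-independence of the constant are consistent with what the paper does (and leaves implicit).
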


\section{A posteriori error estimates for the Vlasov equation} \label{sec4} 

The study of the Vlasov part rely on a gradient estimate for the dual solution. 
Here, the $L_2$-norm estimates, would only yield 
error bounds depending 
on the size of residuals, with no $h^\alpha$-rates. 
Despite the smallness of the residual norms this, however, does not imply 
concrete convergence rate and smaller residual norms require unrealistically 
finer degree of resolution. The remedy is to employ negative norm 
estimates, in order to gain convergence rates of the order
$h^\alpha $, for some $ \alpha>0.$ In this setting a $H_{-1}(H_{-1})$-norm 
is inappropriate. Hence, this section is devoted to 
$L_\infty(H_{-1})$-norm error estimates for the Vlasov equation in the 
Vlasov-Maxwell system. 

\subsection{ $L_\infty(H_{-1})$ a posteriori error estimates for the Vlasov equation}

The streamline diffusion method on the $ i $th step for the 
Vlasov equation can be formulated as: find $ f^{h,i} \in V_h $ such that for $ m=1, 2, \dots , M $,
\begin{equation} \label{sdvlasov}
\begin{split}
( f_t^{h,i} + G(f^{h,i-1}) \cdot \nabla f^{h,i} , g + & \delta (g_t + G(f^{h,i-1}) \cdot \nabla g ))_m \\
&+ \sprod{f^{h,i}_+}{g_+}_{m-1} = \sprod{f^{h,i}_-}{g_+}_{m-1}, \quad \forall 
g \in V_h,
\end{split}
\end{equation}
where the drift factor
\[
G(f^{h,i-1}) = ( \hat{v}_1 , E_1^{h,i} + \hat{v}_2 B^{h,i} , E_2^{h,i} - \hat{v}_1 B^{h,i} )
\]
is computed using the solutions of the Maxwell equations.
As in the Maxwell part we decompose the error into two parts
\[
f-f^{h,i}= \underbrace{f-f^i}_{\textnormal{analytical iteration error}} + \underbrace{f^i-f^{h,i}}_{\textnormal{numerical error}}= \E^i + e^i ,
\]
where $ f^i $ is the exact solution of the approximated Vlasov equation at the $ i $th iteration step: 
\begin{equation}\label{Exact_Vlasov1}
f^i_t + G( f^{h,i-1} ) \cdot \nabla f^i = 0.
\end{equation}
To estimate the numerical error we 
formulate a corresponding dual problem as
\begin{equation} \label{Dual_vlasov}
\left\{ \begin{array}{l}
-\Psi^i_t - G (f^{h,i-1} ) \cdot \nabla \Psi^i = 0, \\
\Psi^i(T,x,v)=\chi (x, v),
\end{array} 
\right. 
\end{equation}
where $ \chi \in H^1 ( \Omega_x \times \Omega_v ) $.
Multiplying $ e^i (T, x, v) $ by $ \chi $ and integrating over $ \Omega_x \times \Omega_v$,
\[
\sprod{e^i}{\chi}_M = \sprod{e^i_-}{\chi_+}_M +
\sum_{m=1}^M \Big( (e^i, - \Psi^i_t )_m + ( e^i, - G( f^{h,i-1} ) \cdot \nabla \Psi^i )_m \Big) .
\]
Since $ G( f^{h,i-1}) $ is divergence free (i.e., we have a gradient field),
we may manipulate the sum above as in \eqref{DualPFormula2} and
\eqref{DualPFormula3}, ending up with
\[
\sprod{e^i}{\chi}_M = \sprod{e^i_-}{\chi_+}_M + \sum_{m=1}^M \Big(\sprod{e^i_+}{\Psi^i_+}_{m-1} - \sprod{e^i_-}{\Psi^i_-}_m
+ ( e^i_t + G( f^{h,i-1} ) \cdot \nabla e^i , \Psi^i )_m \Big).
\]
Adding and subtracting appropriate auxiliary terms, see 
\eqref{DualPFormula4}, 
this simplifies to 
\[
\sprod{e^i}{\chi}_M = \sum_{m=1}^M - \sprod{ [ f^{h,i} ]}{\Psi^i_+}_{m-1} - \Big( f^{h,i}_t + G( f^{h,i-1} ) \cdot \nabla f^{h,i} , \Psi^i \Big)_m,  
\]
where we have used \eqref{Exact_Vlasov1}. 
Let now $ \tilde{\Psi}^i $ be an interpolant of $ \Psi^i $ and use \eqref{sdvlasov} with $ g=\tilde{\Psi}^i $ to get
\begin{equation} \label{VlaFormula4}
\begin{split}
\sprod{e^i}{\chi}_M  = \sum_{m=1}^M \Big\{ &\sprod{ [ f^{h,i} ]}{\tilde{\Psi}^i_+ - \Psi^i_+}_{m-1} \\
&+  \Big( f^{h,i}_t + G( f^{h,i-1} ) \cdot \nabla f^{h,i}, 
\tilde{\Psi}^i - \Psi^i\Big)\\
&+  \Big( f^{h,i}_t + G( f^{h,i-1} ) \cdot \nabla f^{h,i},  \delta ( \tilde{\Psi}^i_t + G( f^{h,i-1} ) \cdot \nabla \tilde{\Psi}^i ) \Big)_m \Big\}.
\end{split}  
\end{equation}
In the sequel we shall use the residuals
\[
R_1^i = f^{h,i}_t + G( f^{h,i-1} ) \cdot \nabla f^{h,i} 
\]
and
\[
R^i_2 |_{S_m} = \big( f^{h,i}_+ (t_m, x,v ) - f^{h,i}_- (t_m, x,v ) \big) / h ,
\]
where $ R_2^i $ is constant in time on each slab.

Finally, we introduce 
the projections $ P $ and $ \pi $  defined in a similar way as 
in the Maxwell part, where the local projections
\[
P_m : L_2 (S_m) \rightarrow V_m^h = 
\{ u|_{S_m} ; u \in V^h \}
\]
and
\[
\pi_m :  L_2 (S_m) \rightarrow \Pi_{0,m} = \{ u \in  L_2 (S_m) ; \,
u ( \cdot , x, v) \,\, \textnormal{is constant on} \,\, I_m, (x,v) \in \Omega \} 
\]
are defined such that
$$
\int_{\Omega}(P_m\varphi)^T\cdot u\, dx dv=
\int_{\Omega}\varphi^T\cdot u\, dx dv ,\qquad \forall u\in V_m^h 
$$
and 
$$
\pi_mu|_{S_m}=\frac{1}{h} \int_{I_m} u(t, \cdot, \cdot)\, dt .
$$

The main result of this section is as follows:
\begin{theorem}\label{thenumerror}
There exists a constant $C$ such that 
\[
\norm{e^i(T, \cdot)}_{H^{-1} (\Omega)} \leq C 
\left( \norm{h R^i_1}_{L_2 (Q_T)} 
(2 + \norm{G( f^{h,i-1} )}_{L_\infty (Q_T)} ) + 
\norm{h R^i_2}_{L_2 (Q_T)} \right) .
\]
\end{theorem}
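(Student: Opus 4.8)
The plan is to mirror the argument of Theorem~\ref{ApostMaxHH} and its $L_\infty(H^{-1})$ counterpart, taking as starting point the error representation \eqref{VlaFormula4}. First I would fix the interpolant $\tilde{\Psi}^i = P\pi\Psi^i = \pi P\Psi^i$ built from the slab-wise projections $P_m$ and $\pi_m$ just introduced, so that the three groups of terms in \eqref{VlaFormula4} fall into the same pattern as the jump term and the two residual terms handled in the Maxwell section. Since here we test $e^i(T,\cdot)$ against $\chi \in H^1(\Omega)$ with $\Omega = \Omega_x \times \Omega_v$, the outcome will be a bound for $\sprod{e^i}{\chi}_M$ that, after dividing by $\norm{\chi}_{H^1(\Omega)}$ and taking the supremum over $\chi$, yields the asserted $H^{-1}(\Omega)$ estimate at time $T$.

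For the jump contribution $\sum_{m=1}^M \sprod{[ f^{h,i} ]}{\tilde{\Psi}^i_+ - \Psi^i_+}_{m-1}$ I would write $[ f^{h,i} ] = h R_2^i$ on each slab, insert $\pm P\Psi^i_+$, and split the sum as $J_1 + J_2$ exactly as in \eqref{DualPFormula6}. Using the identity $h\Psi^i_+(t_{m-1},\cdot) = \int_{I_m}\Psi^i(t,\cdot)\,dt - \int_{I_m}\int_{t_{m-1}}^t \Psi^i_s(s,\cdot)\,ds\,dt$, the fact that $R_2^i$ is time-constant on each slab, and the $L_2$-stability together with the first-order approximation property of $P$, the estimates \eqref{DualPFormula7}--\eqref{DualPFormula9} carry over with no change beyond notation and give $|J_1| + |J_2| \le C\norm{h R_2^i}_{L_2(Q_T)}\norm{\Psi^i}_{H^1(Q_T)}$.

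For the two remaining groups, which feature the residual $R_1^i = f^{h,i}_t + G(f^{h,i-1})\cdot\nabla f^{h,i}$, I would bound the plain interpolation term by $|(R_1^i, \tilde{\Psi}^i - \Psi^i)_m| \le C\norm{R_1^i}_{L_2}\norm{\Psi^i - \tilde{\Psi}^i}_{L_2} \le Ch\norm{R_1^i}_{L_2}\norm{\Psi^i}_{H^1}$, and, using that $\delta$ is a multiple of $h$ and that $\norm{\tilde{\Psi}^i}_{H^1(Q_T)} \le C\norm{\Psi^i}_{H^1(Q_T)}$ by stability of the projections,
\[
\bigl|\bigl(R_1^i,\, \delta(\tilde{\Psi}^i_t + G(f^{h,i-1})\cdot\nabla\tilde{\Psi}^i)\bigr)_m\bigr| \le Ch\norm{R_1^i}_{L_2}\bigl(\norm{\tilde{\Psi}^i_t}_{L_2} + \norm{G(f^{h,i-1})}_{L_\infty}\norm{\nabla\tilde{\Psi}^i}_{L_2}\bigr) \le Ch\norm{R_1^i}_{L_2}\bigl(1 + \norm{G(f^{h,i-1})}_{L_\infty(Q_T)}\bigr)\norm{\Psi^i}_{H^1}.
\]
Adding the coefficient $1$ from the interpolation term to the bracket $1 + \norm{G(f^{h,i-1})}_{L_\infty(Q_T)}$ produces the factor $2 + \norm{G(f^{h,i-1})}_{L_\infty(Q_T)}$ in front of $\norm{h R_1^i}_{L_2(Q_T)}$, while the jump term supplies $\norm{h R_2^i}_{L_2(Q_T)}$; this is also where the analogue of \eqref{DualPFormula10} is used.

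Finally, collecting the three estimates, dividing by $\norm{\chi}_{H^1(\Omega)}$ and passing to the supremum delivers the claim, provided one has the stability bound $\norm{\Psi^i}_{H^1(Q_T)} \le C\norm{\chi}_{H^1(\Omega)}$ for the dual transport problem \eqref{Dual_vlasov}, the analogue of Lemma~\ref{stabmax}. This is the step I expect to be the main obstacle: in contrast with the Maxwell dual, whose components solve decoupled constant-coefficient transport equations and admit the explicit d'Alembert representation used in the proof of Lemma~\ref{stabmax}, the Vlasov dual carries the variable coefficient $G(f^{h,i-1})$, which is merely a piecewise polynomial assembled from the computed fields; propagating an $H^1$ bound along the characteristics of \eqref{Dual_vlasov} requires control of $\nabla G(f^{h,i-1})$, which is precisely the gradient-estimate difficulty for hyperbolic duals flagged in the introduction. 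I would resolve it by exploiting that $G(f^{h,i-1})$ is divergence free, so its flow is measure preserving and transports the $L_2$-norm isometrically, and then running a Gr\"{o}nwall argument in $t$ on the first derivatives of $\Psi^i$ along the characteristics, absorbing the norms of $\nabla G(f^{h,i-1})$ and of $\nabla\chi$ into the constant $C$.
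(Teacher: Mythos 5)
Your proposal is correct and follows essentially the same route as the paper: the same interpolant $\tilde{\Psi}^i = P\pi\Psi^i$, the same $\tilde{J}_1+\tilde{J}_2$ splitting of the jump term, the same treatment of the $R_1^i$ terms yielding the factor $2+\norm{G(f^{h,i-1})}_{L_\infty(Q_T)}$, and the same concluding duality step. The stability bound $\norm{\Psi^i}_{H^1(Q_T)} \le C\norm{\chi}_{H^1(\Omega)}$ that you flag as the main obstacle is proved in the paper exactly as you propose: divergence-freeness of $G$ gives $L_2$-conservation, and a Gr\"onwall argument on the $x$- and $v$-derivatives of the characteristic flow $(X,V)$ bounds $\nabla\Psi^i$ with a constant depending on $\norm{\partial_x W^h}_\infty$ and $\norm{B^h}_\infty$.
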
 
\begin{proof}
We choose the interpolants so that 
$ \tilde{\Psi}^i = P \pi \Psi^i = \pi P \Psi^i $, then
\begin{equation*}
\begin{split} 
\sum_{m=1}^M&
\langle [f^{h,i}], \Psi^i_+ - \tilde\Psi^i_+ \rangle_{m-1} = 
\sum_{m=1}^M \langle h\frac{[f^{h,i}]} h, \Psi^i_+ - P \Psi^i_+ + P \Psi^i_+ - \tilde\Psi^i_+ \rangle_{m-1} \\
&= \sum_{m=1}^M \langle h \frac{[f^{h,i}]} h, \Psi^i_+ - P \Psi^i_+ \rangle_{m-1}
+\sum_{m=1}^M \langle h \frac{[f^{h,i}]} h , P \Psi^i_+ - \tilde\Psi^i_+\rangle_{m-1}
:= \tilde{J}_1+ \tilde{J}_2.
\end{split}
\end{equation*}
The terms $ \tilde{J}_1 $ and $ \tilde{J}_2 $ are estimated in a similar way as 
$ J_1 $ and $ J_2 $, ending up with the estimate
\[
| \tilde{J}_1 | + | \tilde{J}_2 | \leq C \norm{h R^i_2}_{L_2 (Q_T)} \norm{\Psi^i}_{H^1 (Q_T)} .
\]
Hence, it remains to bound the second and third terms in 
\eqref{VlaFormula4}. To proceed, recalling the residuals $R_1^i$ and  
$R_2^i$, we have that 
\[
\begin{split}
\Big| \sum_{m=1}^M \Big( f^{h,i}_t + G( f^{h,i-1} ) 
\cdot \nabla f^{h,i}, \tilde{\Psi}^i - \Psi^i 
+ \delta ( \tilde{\Psi}^i_t + G( f^{h,i-1} ) 
\cdot \nabla \tilde{\Psi}^i ) \Big)_m \Big| \\ 
\leq \sum_{m=1}^M | (R_1^i, \tilde{\Psi}^i - \Psi^i )_m | 
+ \delta | (R_1^i , \tilde{\Psi}_t^i + G( f^{h,i-1} ) 
\cdot \nabla \tilde{\Psi}^i )_m | \\
\leq C \norm{h R^i_1}_{L_2 (Q_T)} 
\norm{\Psi}_{H^1 (Q_T)} ( 2 + \norm{ G( f^{h,i-1} )}_{L_\infty (Q_T)} ), 
\end{split}
\]
where we used that $\delta = C h$. Summing up we have the estimate
\[
\sprod{e^i}{\chi}_M \leq C \norm{\Psi}_{H^1 (Q_T)} \left( \norm{h R^i_1}_{L_2 (Q_T)} (2 + \norm{G( f^{h,i-1} )}_{L_\infty (Q_T)} ) + \norm{h R^i_2}_{L_2 (Q_T)} \right) .
\]
Together with the following stability estimate this completes the proof.
\end{proof}

\begin{lemma}
There exists a constant $ C $ such that
\[
\norm{\Psi}_{H^1 (Q_T)} \leq C \norm{\chi}_{H^1 (\Omega)}.
\]
\end{lemma}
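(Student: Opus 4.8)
The plan is to mirror the proof of Lemma~\ref{stabmax}, replacing the constant-coefficient d'Alembert machinery by energy estimates for the variable-coefficient transport operator $\partt+G\cdot\nabla$, where I abbreviate $G:=G(f^{h,i-1})$. The structural fact I would use throughout is the one already recorded in the derivation of Theorem~\ref{thenumerror}: $G$ is divergence free in phase space, $\diverg_{x,v}G=0$. This is purely algebraic, since $E_1^{h,i},E_2^{h,i},B^{h,i}$ depend only on $(t,x)$, $\hat v_1$ is independent of $x$, and $\d_{v_1}\hat v_2=\d_{v_2}\hat v_1$, so every term in $\diverg_{x,v}G$ cancels. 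Combined with the compact support of all functions in $\Omega=\Omega_x\times\Omega_v$, this makes the convection term skew-symmetric, $(G\cdot\nabla w,w)_\Omega=-\tfrac12(\diverg G\,w,w)_\Omega=0$ for every $w$, which is exactly what lets the energy arguments close.

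First I would establish the $L_2(Q_T)$ bound. Multiplying the dual equation \eqref{Dual_vlasov} by $\Psi^i$ and integrating over $\Omega$, the convection term drops and one is left with $-\tfrac12\partt\norm{\Psi^i(t,\cdot)}_{L_2(\Omega)}^2=0$; integrating backward from $t=T$ with $\Psi^i(T,\cdot)=\chi$ gives $\norm{\Psi^i(t,\cdot)}_{L_2(\Omega)}=\norm{\chi}_{L_2(\Omega)}$, and integrating over $(0,T)$ gives $\norm{\Psi^i}_{L_2(Q_T)}=\sqrt T\,\norm{\chi}_{L_2(\Omega)}$, the analogue of the first half of Lemma~\ref{stabmax}. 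Next, for the derivatives, I would let $\d$ denote any one of $\d_x,\d_{v_1},\d_{v_2}$, differentiate \eqref{Dual_vlasov} to obtain the transport equation with source $-\d_t(\d\Psi^i)-G\cdot\nabla(\d\Psi^i)=(\d G)\cdot\nabla\Psi^i$, test it with $\d\Psi^i$, drop the convection term as above, and bound the right-hand side by $\norm{\d G}_{L_\infty(Q_T)}\norm{\nabla\Psi^i(t,\cdot)}_{L_2(\Omega)}\norm{\d\Psi^i(t,\cdot)}_{L_2(\Omega)}$. Summing over the three choices of $\d$ yields $-\partt\norm{\nabla\Psi^i(t,\cdot)}_{L_2(\Omega)}^2\le C\norm{\nabla G}_{L_\infty(Q_T)}\norm{\nabla\Psi^i(t,\cdot)}_{L_2(\Omega)}^2$, and since $\nabla\Psi^i(T,\cdot)=\nabla\chi$, Gr\"onwall's inequality applied backward in $t$ followed by integration over $(0,T)$ gives $\norm{\nabla\Psi^i}_{L_2(Q_T)}\le C\norm{\nabla\chi}_{L_2(\Omega)}$ with $C=C(T,\norm{\nabla G}_{L_\infty(Q_T)})$. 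Adding the two bounds produces $\norm{\Psi^i}_{H^1(Q_T)}\le C\norm{\chi}_{H^1(\Omega)}$.

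The main obstacle, and really the only genuine point, is that the constant produced this way depends on $\norm{\nabla G}_{L_\infty(Q_T)}$, hence ultimately on $\norm{\d_xE_1^{h,i}}_{L_\infty}$, $\norm{\d_xE_2^{h,i}}_{L_\infty}$ and $\norm{\d_xB^{h,i}}_{L_\infty}$ (the $v$-derivatives of $\hat v$ being bounded by $1$). One must know these stay bounded uniformly in $h$ and in the iteration index $i$; this is where the stability of the SD discretization of the Maxwell equations from Section~\ref{sec3}, the assumption that the iteration generates Cauchy sequences converging to the analytic solution, and the $C^1$-regularity of the exact solution from the Glassey--Schaeffer theorem enter, and it is consistent with $\norm{G(f^{h,i-1})}_{L_\infty(Q_T)}$ already appearing as a factor in Theorem~\ref{thenumerror}. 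An alternative route that meets the same wall is to represent $\Psi^i$ by the phase-space characteristic flow of $G$: because $\diverg_{x,v}G=0$ this flow is measure preserving, which gives the $L_2$ identity for free, while differentiating $\Psi^i(t,z)=\chi\big(Z^i(T;t,z)\big)$ reduces the gradient estimate to controlling the Jacobian of the flow map, which the variational equation bounds by $\exp\!\big(T\norm{\nabla G}_{L_\infty(Q_T)}\big)$ — the same quantity.
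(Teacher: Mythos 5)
Your proposal is correct, and for the crucial gradient bound it takes a genuinely different route from the paper. The $L_2$ part is identical: both exploit that $G(f^{h,i-1})$ is divergence free in $(x,v)$ (your algebraic verification via $\d_{v_1}\hat v_2=\d_{v_2}\hat v_1$ is exactly the reason the paper can discard the convection term), so the convection term is skew-symmetric and the $L_2(\Omega)$ norm is conserved backward in time. For the derivatives, however, the paper does not differentiate the PDE; it writes $\Psi^i(t,x,v)=\chi(X(0,T-t,x,v),V(0,T-t,x,v))$ via the characteristic flow of \eqref{char_eq}, differentiates the characteristic ODEs with respect to $x$ and $v_j$, and applies Gr\"onwall to the resulting variational system to bound $\abs{\d_x X}+\abs{\d_x V_1}+\abs{\d_x V_2}$ by $\exp\big(\int 2+2\norm{\d_x W^h}_\infty+3\norm{B^h}_\infty\,d\tau\big)$, which is precisely the "Jacobian of the flow map" alternative you sketch at the end. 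Your primary route --- testing the differentiated transport equation with $\d\Psi^i$, dropping the convection term by skew-symmetry, and running Gr\"onwall on $\norm{\nabla\Psi^i(t,\cdot)}_{L_2(\Omega)}^2$ --- is an $L_2$ energy argument that avoids invoking the characteristic representation altogether, at the price of needing enough smoothness to differentiate the equation weakly; the paper's Lagrangian argument gives pointwise (in $(x,v)$) control of the flow derivatives and sits closer to the kinetic-theory literature it cites, but both end up with the same constant, depending on $T$, $\norm{\nabla G}_{L_\infty}$ (equivalently $\norm{\d_x W^h}_\infty$ and $\norm{B^h}_\infty$). Two small points: you should add the one line $\d_t\Psi^i=-G\cdot\nabla\Psi^i$, hence $\norm{\d_t\Psi^i}\le\norm{G}_\infty\norm{\nabla\Psi^i}$, to cover the time derivative in the $H^1(Q_T)$ norm (the paper records this as a separate step); and your closing discussion of why $\norm{\nabla G}_{L_\infty}$ should be bounded uniformly in $h$ and $i$ is actually more explicit than the paper, which silently absorbs these quantities into $C_T$.
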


\begin{proof}
We start estimating the $ L_2 $-norm:
multiply the dual equation \eqref{Dual_vlasov} by $ \Psi^i $ 
and integrate over $ \Omega $ to get
\[
- \int_{\Omega} \Psi^i \d_t \Psi^i dx dv - \int_{\Omega} G( f^{h,i-1} ) \cdot \nabla \Psi^i \Psi^i dx dv = 0.
\]
The second integral is zero, since $ \Psi^i $ vanishes at the boundary of $ \Omega $, so that we have
\[
- \d_t \norm{\Psi^i}^2_{L_2 (\Omega)} = 0 .
\]
Integrating  over $ (t, T) $ yields
\[
\norm{\Psi^i (t, \cdot, \cdot) }^2_{L_2 (\Omega)} \leq \norm{\chi}_{L_2 (\Omega)}^2.
\]
Once again integrating in time, we end up with
\[
\norm{\Psi^i }_{L_2 (Q_T)} \leq \sqrt{T} \norm{\chi}_{L_2 (\Omega)}.
\]
It remains to estimate $ \norm{\nabla \Psi^i}_{L_2(Q_T)} $. To this approach we rely on
the characteristic representation of the solution for \eqref{Dual_vlasov}, see e.g., \cite{Rein:90}:
\[
\Psi^i (t, x, v) = \chi ( X(0, T-t, x, v), V(0, T-t, x, v)),
\]
with $ X(s, t, x, v) $ and $ V(s, t, x, v) $ being the solutions to the characteristic system
\begin{equation} \label{char_eq}
\begin{split}
\frac{dX}{ds} = & \hat{V}_1, \quad  \hskip 3.84cm X(s, s, x, v) = x, \\
\frac{dV}{ds} = & E^{h, i} (s, X) + B^{h, i} (s, X) M \hat{V}, \quad V(s, s, x, v) = v,
\end{split} 
\end{equation}
where
\[
M = \left( \begin{array}{cccc}
0 & 1  \\
-1 & 0  \\
\end{array} \right) .
\]
Hence, we have
\[
\nabla \Psi^i = \nabla X (0, T-t, x, v) \frac{\d \chi}{\d x} + \nabla V_1 (0, T-t, x, v) \frac{\d \chi}{\d v_1} + \nabla V_2 (0, T-t, x, v) \frac{\d \chi}{\d v_2} .
\]
Thus, it suffices to estimate the gradients of $ X $ and $ V $. 
Below we shall estimate the derivatives of $ X $, $ V_1 $ and $ V_2 $
with respect to $ x $. The estimates with respect to $ v_i, \,\, i = 1, 2 $ are done in a similar way.
Differentiating \eqref{char_eq} with respect to $x$ we get
\[
\begin{split}
\frac{d}{ds} \d_{x} X =& \frac{\d_{x} V_1}{\sqrt{1 + V_1^2 + V_2^2}} 
- \frac{V_1 (V_1 \d_{x} V_1 + V_2 \, \d_{x} V_2)}{(1 + V_1^2 + V_2^2)^{3/2}} \\
\frac{d}{ds} \d_{x} V_1 =& \d_x E_1^{h, i} \d_x X + \frac{V_2 \d_x B^{h, i} \d_x X}{\sqrt{1 + V_1^2 + V_2^2}}\\
&\qquad\qquad
+ B^{h, i} \left( \frac{\d_{x} V_2}{\sqrt{1 + V_1^2 + V_2^2}}
- \frac{V_2 (V_1 \d_{x} V_1 + V_2 \, \d_{x} V_2)}{(1 + V_1^2 + V_2^2)^{3/2}} \right) \\
\frac{d}{ds} \d_{x} V_2 =& \d_x E_2^{h, i} \d_x X - \frac{V_1 \d_x B^{h, i} \d_x X}{\sqrt{1 + V_1^2 + V_2^2}}\\
&\qquad\qquad - B^{h, i} \left( \frac{\d_{x} V_1}{\sqrt{1 + V_1^2 + V_2^2}}
- \frac{V_1 (V_1 \d_{x} V_1 + V_2 \, \d_{x} V_2)}{(1 + V_1^2 + V_2^2)^{3/2}} \right).  
\\
\end{split}
\]
Integrating these equations over $ [s, t] $ 
and then taking the absolute values give 
\begin{equation*}
\begin{split}
\abs{\partial_x X(s)}& \le 1+\int_s^t \Big(2\abs{\partial_x V_1}+\abs{\partial_x V_2}\Big)\, d\tau \\ 
\abs{\partial_x V_1(s)}&\le \int_s^t\Big(\norm{\partial_x E_1^h}_\infty+\norm{\partial_xB^h}_\infty \Big) 
\abs{\partial_x X} + 
\norm{B^h}_\infty\Big(\abs{\partial_x V_1}+ 2\abs{\partial_x V_2}\Big)\, d\tau \\
\abs{\partial_x V_2(s)}&\le \int_s^t\Big(\norm{\partial_x E_2^h}_\infty+\norm{\partial_xB^h}_\infty \Big) 
\abs{\partial_x X} +
\norm{B^h}_\infty\Big(2\abs{\partial_x V_1}+ \abs{\partial_x V_2}\Big)\, d\tau.
\end{split}
\end{equation*}
Summing up we have that
\begin{equation*}
\begin{split}
\abs{\partial_x X(s)}+\abs{\partial_x V_1(s)}+\abs{\partial_x V_2(s)}\le &
1+\int_s^t 2\norm{\partial_x W^h}_\infty \abs{\partial_x X} \\
&+
\Big(2+3\norm{B^h}_\infty\Big)\Big(\abs{\partial_x V_1}+\abs{\partial_x V_2}\Big)\, d\tau .
\end{split} 
\end{equation*} 
Now an application of the Gr\"onwall's lemma yields 
\begin{equation}\label{Gronwall1}
\abs{\partial_x X(s)}+\abs{\partial_x V_1(s)}+\abs{\partial_x V_2(s)}\le 
\exp\Big( \int_s^t 2+2 \norm{\partial_x W^h}_\infty +3 \norm{B^h}_\infty\, d\tau \Big). 
\end{equation}
By similar estimates for derivatives with respect to velocity components 
we have 
\begin{equation}\label{Gronwall2} 
\abs{\partial_{v_j}X(s)}+\abs{\partial_{v_j}V_1(s)}+\abs{\partial_{v_j}V_2(s)}
\le \exp\Big(\int_s^t 2 + 3\norm{B^h}_\infty\, d\tau\Big), \quad j=1, 2. 
\end{equation}
The estimates \eqref{Gronwall1} and \eqref{Gronwall2} would result to the 
key inequalities
\begin{equation}\label{ESTfixv1}
\norm{\partial_x\Psi}\le C_T\norm{\nabla \chi}\qquad\mbox{and}\qquad 
\norm{\partial_{v_j}\Psi}\le C_T\norm{\nabla \chi}, \quad j=1,2, 
\end{equation}
which together with the equation for $ \Psi $ gives 
\begin{equation}\label{ESTfit1}
\norm{\partial_t\Psi}\le C_T\norm{\nabla\chi}.
\end{equation}
Summing up we have shown that 
\begin{equation}\label{Gradestim1}
\norm{\Psi}_{H^1(Q_T)}\le C_T\norm{\chi}_{H^1(\Omega)},
\end{equation}
which proves the desired result.
\end{proof}

Using the assumption that the iteration error $ \E^i $ converges
and is dominated by the numerical error $ e^i $,
together with Theorem \ref{thenumerror}, we get the following result:
\begin{corollary}
There exists a constant $ C $ such that
\[
\begin{split}
\norm{f (T, \cdot) - f^{h,i} (T, \cdot)}_{H^{-1}(\Omega)} & \leq \\
 C\Big( \norm{h R^i_1}_{L_2 (Q_T)} & (2 + \norm{G( f^{h,i-1} )}_{L_\infty (Q_T)} )
+ \norm{h R^i_2}_{L_2 (Q_T)} \Big).
\end{split}
\]
\end{corollary}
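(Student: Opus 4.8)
The plan is to obtain the corollary by combining Theorem~\ref{thenumerror} with the standing convergence assumption on the iteration scheme, exactly as the corresponding corollaries in Section~\ref{sec3} follow from Theorem~\ref{ApostMaxHH}. First I would use the error splitting $f - f^{h,i} = \E^i + e^i$ with $\E^i = f - f^i$ and $e^i = f^i - f^{h,i}$ recorded before \eqref{Exact_Vlasov1}, evaluate it at $t=T$, and apply the triangle inequality in $H^{-1}(\Omega)$:
\[
\norm{f(T,\cdot)-f^{h,i}(T,\cdot)}_{H^{-1}(\Omega)} \le \norm{\E^i(T,\cdot)}_{H^{-1}(\Omega)} + \norm{e^i(T,\cdot)}_{H^{-1}(\Omega)}.
\]

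Next I would control the numerical part by Theorem~\ref{thenumerror}, which already gives
\[
\norm{e^i(T,\cdot)}_{H^{-1}(\Omega)} \le C\Big(\norm{hR^i_1}_{L_2(Q_T)}(2+\norm{G(f^{h,i-1})}_{L_\infty(Q_T)}) + \norm{hR^i_2}_{L_2(Q_T)}\Big),
\]
i.e.\ precisely the right-hand side claimed in the corollary. It then remains to absorb $\norm{\E^i(T,\cdot)}_{H^{-1}(\Omega)}$. For this I would invoke the iteration hypothesis from Section~\ref{sec2} — that the coupled fixed-point iteration generates Cauchy sequences converging to the analytic Vlasov-Maxwell solution (cf.\ \cite{Standar:2015}) — which in the present $L_\infty(H^{-1})$ framework yields the analogue of \eqref{AApriori}, namely $\norm{\E^i(T,\cdot)}_{H^{-1}(\Omega)} \ll \norm{e^i(T,\cdot)}_{H^{-1}(\Omega)}$ for $i$ large enough. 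Hence for such $i$ the iteration error is dominated by the numerical error and can be hidden in the generic constant $C$, which completes the argument.

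The only genuinely delicate point is not a computation but the justification for discarding $\E^i$: one needs both qualitative convergence of the iterates and the quantitative statement that, on a \emph{fixed} mesh, the iteration contributes less than the numerical residual at the final time. A secondary point worth checking is that the constants $C_T$ produced in the stability lemma via \eqref{Gronwall1}--\eqref{Gradestim1} are uniform in $i$; since the drift $G(f^{h,i-1})$ in \eqref{Dual_vlasov} and the characteristic system \eqref{char_eq} involve the previous Maxwell iterate, this uniformity relies on uniform bounds for $\norm{\partial_x W^{h}}_\infty$ and $\norm{B^{h}}_\infty$ along the convergent iteration, which the standing assumptions supply.
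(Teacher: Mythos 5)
Your proposal matches the paper's own (very brief) argument exactly: split $f-f^{h,i}$ into the iteration error $\E^i$ and the numerical error $e^i$, bound $e^i(T,\cdot)$ by Theorem~\ref{thenumerror}, and absorb $\E^i$ using the standing assumption that the iteration error is dominated by the numerical error, as in \eqref{AApriori}. Your closing remarks on the quantitative nature of the domination assumption and the uniformity of the Gr\"onwall constants in $i$ are reasonable caveats, but they do not alter the route, which is the same as the paper's.
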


\section{Conclusions and future works} \label{sec5}

We have presented an
 a posteriori error analysis of the streamline diffusion (SD) 
scheme for the
relativistic one and one-half dimensional Vlasov-Maxwell system.
The motivation behind our choice of the method is that 
the standard finite element method for  
 hyperbolic problems is sub-optimal. 
The streamline diffusion is performed 
slab-wise and allows jump discontinuities across the time grid-points. 
The SD approach 
have stabilizing effect due to the fact that, adding a multiple
of the streaming term to the test function,
it corresponds to an automatic add
of diffusion to the equation. 

Numerical study of the VM system has some draw-backs
in both stability and convergence.
The VM system lacks dissipativity which, in general,
affects the stability. Further, 
$L_2 (L_2)$ a posteriori error bounds would only be of 
the order of the norms of residuals.
In our study, in order to derive error estimates with convergence rates
of order $ h^\alpha $, for some $ \alpha > 0 $,
the $ H^{-1} (H^{-1})$ and $L_\infty (H^{-1})$ environments
are employed. However, because of the lack of dissipativity,
the $ H^{-1} (H^{-1})$-norm is not extended to the Vlasov part,
where appropriate stability estimates are not available.
Therefore the numerical study of the Vlasov part is restricted to
the $L_\infty (H^{-1})$ environment.

The computational aspects and implementations, 
which justify the theoretical results of this part,  
 are the subject of a forthcoming study which is addressed in \cite{Bondestam_Standar:2017}. 

Future studies, in addition to considering higher dimensions and 
implementations, may contain investigations concerning 
the assumption on the convergence of the iteration procedure, 
see end of Section 2. 
 
We also plan to extend this study to Vlasov-Schr\"{o}dinger-Poisson system, 
where we rely on 
the theory developed by Ben Abdallah et al.\ in 
\cite{Ben:2006} and \cite{Ben:2004} 
and consider a novel discretization procedure 
based on the mixed virtual element method, as in Brezzi et al.\
 \cite{Brezzi_Falk:2014}.

\def\listing#1#2#3{{\sc #1}:\ {\it #2},\ #3.}

\end{document}